\newtheorem{theorem}{Theorem}[section]
\newtheorem{lemma}[theorem]{Lemma}
\newtheorem{proposition}[theorem]{Proposition}
\newtheorem{corollary}[theorem]{Corollary}
\theoremstyle{definition}
\newtheorem{definition}[theorem]{Definition}
\newtheorem{problem}[theorem]{Problem}
\theoremstyle{remark}
\newtheorem{remark}[theorem]{Remark}
\newcommand{\E}{{\mathbb E}}
\numberwithin{equation}{section}
\begin{document}
\setcounter{page}{1}

\title[Martingale Hardy spaces]{Martingale Hardy spaces with variable exponents}

\author[Y. Jiao, D. Zhou, Z. Hao, W. Chen]{Yong Jiao$^1$$^{*}$, Dejian Zhou$^1$, Zhiwei Hao$^1$ and Wei Chen$^2$}

\address{$^{1}$ School of Mathematics and Statistics, Central South University, Changsha 410083, China.}
\email{\textcolor[rgb]{0.00,0.00,0.84}{jiaoyong@csu.edu.cn;
zhoudejian@csu.edu.cn; zw\underline{ }hao8@163.com}}

\address{$^{2}$ School of Mathematical Sciences, Yangzhou University,
Yangzhou 225002, China.}
\email{\textcolor[rgb]{0.00,0.00,0.84}{weichen@yzu.edu.cn}}

\subjclass[2010]{Primary 60G46; Secondary 60G42.}

\keywords{martingale Hardy spaces, variable exponents, atomic decomposition.}

\date{Received: xxxxxx; Revised: yyyyyy; Accepted: zzzzzz.
\newline \indent $^{*}$ Corresponding author}

\begin{abstract}
In this paper, we introduce Hardy spaces with variable exponents defined on a
probability space and develop the martingale theory of variable
Hardy spaces. We prove the weak type and strong type
inequalities on Doob's maximal operator and get a  $(1,p(\cdot),\infty)$-atomic decomposition
for Hardy martingale spaces associated with conditional square functions. As
applications, we obtain a dual theorem and the John-Nirenberg
inequalities in the frame of variable exponents. The key ingredient is that we find a condition with probabilistic characterization of $p(\cdot)$ to replace the so-called log-H\"{o}lder continuity condition  in $\mathbb {R}^n.$
\end{abstract} \maketitle

\section{Introduction}
Let $p(\cdot): \mathbb{R}^n\rightarrow (0,\infty)$ be a measurable
function such that $0<\inf_{x\in \mathbb{R}^n}p(x)\leq \sup_{x\in
\mathbb{R}^n}p(x)<\infty.$ The space $L^{p(\cdot)}(\mathbb{R}^n)$,
the Lebesgue space with variable exponent $p(\cdot),$ is defined as
the set of all measurable functions $f$ such that for some
$\lambda>0$ $$\int_{\mathbb{R}^n}\left(\frac{|f(x)|}{\lambda }\right)^{p(x)}dx<\infty,$$ with
$$\|f\|_{p(\cdot)}:=\inf\Bigg\{\lambda>0:\int_{\mathbb{R}^n}\Bigg(\frac{|f(x)|}{\lambda}\Bigg)^{p(x)}dx\leq1\Bigg\}.$$
Then $\big(L^{p(\cdot)}, \|\cdot\|_{p(\cdot)}\big)$ is a quasi-normed space.
Such Lebesgue spaces were introduced by Orlicz \cite{orlicz} in 1931
and studied by O. Kov\`{a}\u{c}ik and J. R\'{a}kosn\'{i}k \cite{KR},
X. Fan and D. Zhao \cite{FZ} and others. We refer to two new monograghs
\cite{CF} and \cite{DH} for the recent progress on Lebesgue spaces
with variable exponents and some applications in PDEs and
variational integrals with nonstandard growth conditions. We also
note that in the recent years more attention was turned to the study
of function spaces with variable exponent in harmonic analysis; see
for instance \cite{ah,cruz,cruz2,d2,NS2,sawano,yangdachun}. Let $\Omega \subset \mathbb{R}^n$. We say that a function $p(\cdot):\Omega\rightarrow \mathbb{R}$ is locally log-H\"{o}lder continuous on $\Omega$ if there exists $c_1>0$ such that
\begin{equation}\label{log}
|p(x)-p(y)|\leq \frac{c_1}{\log(e+1/|x-y|)}
\end{equation}
for all $x,y\in \Omega$. Heavily relying on the
so-called log-H\"{o}lder continuity conditions on the variable
exponent functions, in the pioneering work \cite{diening}, Diening
proved that the Hardy-Littlewood maximal operator is bounded on
$L^{p(\cdot)}(\mathbb{R}^n)$. An example in \cite{pick} showed that
log-H\"{o}lder continuity of $p(x)$ is essentially the optimal
condition when the maximal operator is bounded on variable
exponent Lebesgue spaces defined on Euclidean spaces (even in the
doubling metric measure spaces; see \cite{HHM}). We refer to \cite{al} for more questions related to the
maximal operator in variable $L^{p(\cdot)}.$

Although variable exponent Lebesgue spaces on Euclidean space have
attracted a steadily increasing interest over the last couple of
years, the variable exponent framework has not yet been applied to
the probability space setting.  The purpose of the present paper is to introduce Hardy martingale
 spaces with variable exponent, and to develop the martingale theory of variable Hardy spaces. To the best of our knowledge, our paper is the first treating from this perspective.
For a convenience, we
first fix some notations. Let $(\Omega,\mathcal {F},\mathbb{P})$ be
a complete probability space and $\mathcal{P}=\mathcal{P}(\Omega)$
denote the collection of all measurable functions $p(\cdot)$ :
$\Omega\longrightarrow(0,\infty)$ which is called a variable
exponent. For a measurable set $A\subset\Omega$, we denote
$$p_+(A)=\sup_{x\in A}p(x),~~~~~p_-(A)=\inf_{x\in A}p(x) $$
and
$${p_+=p_+(\Omega)},~~~~~p_-=p_-(\Omega).$$ Compared with Euclidean
space $\mathbb{R}^n$,  the probability space $\Omega$ has no natural metric structure. The main difficulty is how to overcome the log-H\"{o}lder continuity \eqref{log} when $p(x)$ is defined on a probability space $(\Omega,\mathcal {F},\mathbb{P})$.

The first aim of this paper is to discuss the weak type and strong type inequalities about Doob's
maximal operator. Aoyama \cite{HA} proved that
Doob's maximal inequality is true under some conditions. Namely, if
$1\leq p(\cdot)<\infty$ and there exists a constant $C$
such that
\begin{equation}\label{f1}\frac{1}{p(\cdot)}\leq C \E\Big(\frac{1}{p(\cdot)}\Big|\mathcal {F}_n\Big),\end{equation}
then
\begin{equation}\label{f2}
\mathbb{P}(\sup_n|f_n|>\lambda)\leq{C_{p(\cdot)}
\int_{\Omega}\Bigg(\frac{|f_\infty|}{\lambda}\Bigg)^{p(\cdot)}d{\mathbb{P}}},\quad
\forall \lambda >0.\end{equation}
And if $1<p_-\leq p_+<\infty$ and
 $p(\cdot)$ is $\mathcal
{F}_n$-measurable for all $n\geq0$, then
\begin{equation}\label{f3}
\|\sup_n|f_n|\|_{p(\cdot)} \leq C_{p(\cdot)} \|f\|_{p(\cdot)}.
\end{equation}
Obviously, the condition that $p(\cdot)$ is $\mathcal
{F}_n$-measurable for all $n\geq0$ is quite strict. In 2013, Nakai and Sadasue
\cite{NS} pointed out that there exists a variable exponent $p(\cdot)
$ such that $p(\cdot) $ is not $\mathcal {F}_0$-measurable, but \eqref{f3} still holds. In this paper, we obtain the weak type
inequality \eqref{f2} without condition \eqref{f1}. Unfortunately we cannot obtain \eqref{f3} directly by the
weak type inequality as the classical case. This is
because that the space $L^{p(\cdot)}$ is no longer a rearrangement
invariant space, and the formula
$$ \int_\Omega |f(x)|^p d\mathbb{P}= p \int_0^\infty t^{p-1}\mathbb{P}( x\in \Omega : |f(x)|>t )dt$$
has no variable exponent analogue (see \cite{DH}). In order to describe the strong type Doob maximal inequality, we find the following condition without
metric characterization of $p(x)$
to replace log-H\"{o}lder continuity in some sense.  That is, there exists an absolute constant $K_{p(\cdot)}\geq1$ depending only on $p(\cdot)$ such that
\begin{equation}\label{65} \mathbb{P}(A)^{p_-(A)-p_+(A)} \leq K_{p(\cdot)}, \quad\forall A\in \mathcal{F}. \end{equation} We often denote $K_{p(\cdot)}$ simply by $K$ if there is nothing confused.  Under the condition of \eqref{65},
 we prove \eqref{f3} is true for any martingale with respect to the atom $\sigma$-algebra
 filtration. It should be mentioned that the condition \eqref{65} is not true usually
(even in Euclidean space); however if the exponent $p(x)$ has a nice
uniform continuity with respect to Euclidean distance, then \eqref{65}
holds. We refer to Lemma 3.2 in \cite{diening} for this fact.

The second aim of this paper is the atomic characterization of variable Hardy martingale spaces. Our result can be regarded as the probability version of \cite{cruz2,NS2}; we do not use the log-H\"{o}lder continuity of $p(x)$ and it seems that our proofs are simpler because of stopping time techniques. Let $\mathcal{T}$ be the set of all stopping times with respect to $\{\mathcal{F}_n\}_{n \geq 0}$. For a martingale
$f=(f_n)_{n\geq0}$ and $\tau \in \mathcal{T}$, we denote the stopped martingale by $f^\tau=(f_n^\tau)_{n\geq0}=(f_{n\wedge\tau})_{n\geq0}$.

\begin{definition} \label{definition11}Given $p(\cdot) \in \mathcal{P}$. A
measurable function $a$ is called a $(1,p(\cdot),\infty)$-atom if
there exists a stopping time $\tau\in\mathcal{T}$ such that
\begin{enumerate}
\item $\E(a|\mathcal {F}_n)=0$,  $\forall\; n\leq \tau$,

\item $\|s(a)\|_\infty \leq \left\|\chi_{\{\tau<\infty\}}\right\|^{-1}_{p(\cdot)}.$
\end{enumerate}
\end{definition}

Denote by $A(s,p(\cdot),\infty)$ be the set of all sequences of pair
$(\mu_k, a^k, \tau_k)$, where $\mu_k$ are
nonnegative numbers, $a^k$ are $(1,p(\cdot),\infty)$-atoms satisfying $(1)$, $(2)$.

In the sequel we always denote $\underline{p}=\min\{p_-,1\}$.

\begin{definition}\label{definition12} Given $p(\cdot) \in \mathcal{P}$.
Let us denote by $H_{p(\cdot)}^{s, at}$ the space of those
martingales for which there exist a sequence $(a^k)_{k\in
\mathbb{Z}}$ of $(1,p(\cdot),\infty)$-atoms and a sequence
$(\mu_k)_{k\in \mathbb{Z}}$ of nonnegative real numbers such
that
\begin{equation}\label{11}
f=\sum_{k\in\mathbb{Z}}\mu_k a^k, \quad a.e. ,
\end{equation}
and
$$\Bigg\|\Bigg\{\sum_{k\in\mathbb{Z}}\Bigg(\frac{\mu_k\chi_{\{\tau_k<\infty\}}}
{\|\chi_{\{\tau_k<\infty\}}\|_{p(\cdot)}}\Bigg)^{\underline{p}}\Bigg\}^{\frac{1}{\underline{p}}}\Bigg\|_{p(\cdot)} < \infty.$$
Let
$$\mathcal{A}(\{\mu_k\},\{a^k\},\{\tau_k\})\equiv\Bigg\|\Bigg\{\sum_{k\in\mathbb{Z}}\Bigg(\frac{\mu_k\chi_{\{\tau_k<\infty\}}}
{\|\chi_{\{\tau_k<\infty\}}\|_{p(\cdot)}}\Bigg)^{\underline{p}}\Bigg\}^{\frac{1}{\underline{p}}}\Bigg\|_{p(\cdot)}.$$
We define
$$\|f\|_{H_{p(\cdot)}^{s, at}}=\inf\mathcal{A}(\{\mu_k\},\{a^k\},\{\tau_k\}),~~~~(\mu_k, a^k, \tau_k)\in A(s,p(\cdot),\infty),$$
where the infimum is taken over all decompositions of the form \eqref{11}. \end{definition}
In Section 4, we prove that  $$H_{p(\cdot)}^s
=H_{p(\cdot)}^{s, at},\quad\quad  p(\cdot)\in \mathcal{P},$$
with equivalent quasi-norms, see Section 2 for the notation $H_{p(\cdot)}^s$. We give some applications of atomic decomposition in
Section 5.
Recall that the Lipschitz space $\Lambda_q(\alpha) (\alpha\geq0, q\geq1),$ is defined as
 the space of all functions $f\in L^q$ for which
$$\|f\|_{\Lambda_q(\alpha)}= \sup_{\tau}|\{ \tau< \infty\}|^ {-\frac{1}{q}-\alpha} \|f-f^\tau\|_q<\infty.$$
It was proved by Weisz in \cite{FW2} that the dual space of
$H_p^s(0<p\leq1)$ is equivalent to
$\Lambda_2(\alpha)(\alpha=1/p-1)$.  The new Lipschitz
space $\Lambda_q(\alpha(\cdot))$ is introduced in Section 5. Let $p(\cdot)$ satisfy \eqref{65}.  We obtain that
$$\Big(H_{p(\cdot)}^s\Big)^*=\Lambda_2\big(\alpha(\cdot)\big),\quad 0<p_- \leq
p_+\leq1,$$ where $\alpha(\cdot)=1/p(\cdot)-1$.

 Finally we get the John-Nirenberg inequality in the frame of variable exponents. If $p(\cdot)$ satisfies \eqref{65}, then
$$\|f \|_{BMO_1}\lesssim \|f \|_{BMO_{p(\cdot)}} \lesssim \|f \|_{BMO_1},\quad1\leq p_- \leq p_+<\infty ,$$
which can be regarded as the probability versions of Theorem 1.2 or Theorem 5.1 in \cite{IST}. See Section 5 for the definition of   $BMO_{p(\cdot)}$.
Furthermore, we also obtain the exponential integrability form of the John-Nirenberg inequality for $BMO_{p(\cdot)},$ which is the probability analogue of Theorem 3.2 in \cite{HO}.
We note that the generalized John-Nirenberg inequalities were proved in the frame of rearrangement invariant spaces in \cite{ywj}, but the
variable $L^{p(\cdot)}$ spaces are not rearrangement invariant spaces except that $p(\cdot)$ is a constant. Again, the condition \eqref{65} plays an important role in the above results, which admits us to estimate the $p(\cdot)$-norm of characterization function and  makes inverse H\"{o}lder's
inequalities available.

Throughout this paper, $\mathbb{Z}$, $\mathbb{N}$ and $\mathbb{C}$
denote the integer set, nonnegative integer set and complex numbers
set, respectively. We denote by $C$ the absolute positive constant,
which can vary from line to line, and denote by $C_{p(\cdot)}$ the
constant dependently only on $p(\cdot).$ The symbol $A\lesssim B$
stands for the inequality $A \leq C B$ or $A \leq C_{p(\cdot)} B$.  If we
write $A\thickapprox B$, then it stands for $A\lesssim B\lesssim A$.

\section{Preliminaries and lemmas}

In this section, we give some preliminaries necessary to the whole
paper. Given $p(\cdot) \in \mathcal{P}$, we always assume that
$0<p_-\leq p_+<\infty$ if no special statement. The space $L^{p(\cdot)}=L^{p(\cdot)}(\Omega)$ is the
collection of all measurable functions $f$ defined on $(\Omega,\mathcal {F},\mathbb{P})$ such that for some $\lambda>0$,
$$\rho({f}\left/{\lambda}\right)=\int_{\Omega}\Bigg(\frac{|f(x)|}{\lambda}\Bigg)^{p(x)}d\mathbb{P}<\infty.$$
This becomes a quasi-Banach function space when it is equipped with the
quasi-norm
$$\|f\|_{p(\cdot)}\equiv\inf\{\lambda>0:\rho({f}\left/{\lambda}\right)\leq1\}.$$

The following facts are well known; see for example \cite{NS2}.
\begin{enumerate}
\item (Positivity) $\|f\|_{p(\cdot)} \geq 0$;  $\|f\|_{p(\cdot)}=0
\Leftrightarrow f\equiv 0$.

\item (Homogeneity) $\|cf\|_{p(\cdot)}=|c| \cdot \|f\|_{p(\cdot)}$ for
$c\in \mathbb{C}$.

\item (The $\underline{p}$-triangle inequality) $\|f+g\|_{p(\cdot)}^{\underline{p}} \leq
\|f\|_{p(\cdot)}^{\underline{p}} +\|g\|_{p(\cdot)}^{\underline{p}}$.
\end{enumerate}
For $p(\cdot) \in \mathcal{P}$ and $p_-> 1$, we define the conjugate exponent
$p'(\cdot)$ by the equation
$$\frac{1}{p(x)}+\frac{1}{p'(x)}=1.$$

 We collect some useful lemmas as follows, which will be used in the
paper.

\begin{lemma} \label{lemma21} (see \cite{cruz2}, page 5) Let $p(\cdot) \in
\mathcal{P}$, and $p_-\geq 1$ then for all $r>0$, we
have
$$\left\||f|^r\right\|_{p(\cdot)}=\|f\|_{rp(\cdot)}^r.$$
\end{lemma}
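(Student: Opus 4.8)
The plan is to argue directly from the definitions of the modular and the Luxemburg-type quasi-norm, reducing the identity to an elementary change of variables in the defining infimum. For an exponent $q(\cdot) \in \mathcal{P}$ and a measurable function $g$ write $\rho_{q(\cdot)}(g) = \int_\Omega |g(x)|^{q(x)}\, d\mathbb{P}$ for the associated modular, so that $\|g\|_{q(\cdot)} = \inf\{\lambda > 0 : \rho_{q(\cdot)}(g/\lambda) \leq 1\}$. Since $f$ is measurable, so is $|f|^r$, and if $f \equiv 0$ both sides of the claimed identity vanish; hence we may assume $f$ is not a.e.\ zero.

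The key computation is the following substitution. For $\lambda > 0$ put $\lambda = \mu^r$ with $\mu > 0$. Then for every $x \in \Omega$,
$$\left(\frac{|f(x)|^r}{\lambda}\right)^{p(x)} = \left(\frac{|f(x)|^r}{\mu^r}\right)^{p(x)} = \left(\frac{|f(x)|}{\mu}\right)^{r p(x)},$$
and integrating over $\Omega$ gives
$$\rho_{p(\cdot)}\!\left(\frac{|f|^r}{\lambda}\right) = \rho_{r p(\cdot)}\!\left(\frac{f}{\mu}\right).$$
Consequently, under $\lambda = \mu^r$, the constraint $\rho_{p(\cdot)}(|f|^r/\lambda) \leq 1$ is exactly the constraint $\rho_{r p(\cdot)}(f/\mu) \leq 1$.

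I would then conclude by taking infima. The map $\mu \mapsto \mu^r$ is a continuous, strictly increasing bijection of $(0,\infty)$ onto itself, so it carries the set $S := \{\mu > 0 : \rho_{r p(\cdot)}(f/\mu) \leq 1\}$ onto $\{\lambda > 0 : \rho_{p(\cdot)}(|f|^r/\lambda) \leq 1\}$, and $\inf\{\mu^r : \mu \in S\} = (\inf S)^r$. Therefore
$$\big\||f|^r\big\|_{p(\cdot)} = \inf\{\mu^r : \mu \in S\} = (\inf S)^r = \|f\|_{r p(\cdot)}^r,$$
which is the assertion.

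There is essentially no hard step here; the one point that deserves a word of care is the commutation of the infimum with the power function, which is exactly why the monotonicity of $t \mapsto t^r$ on $(0,\infty)$ is invoked. One should also note that if $S = \emptyset$ (equivalently $\|f\|_{rp(\cdot)} = +\infty$) then both sides equal $+\infty$ under the usual conventions, so the identity persists. Finally, the hypothesis $p_- \geq 1$ is not used beyond the standing assumption $0 < p_- \leq p_+ < \infty$; it is presumably recorded only to guarantee that $r p(\cdot)$ is an admissible exponent in the contexts where the lemma is later applied.
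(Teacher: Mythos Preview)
Your proof is correct. The paper does not supply its own proof of this lemma but simply cites \cite{cruz2}; your argument is the standard direct verification from the definition of the Luxemburg quasi-norm via the change of variables $\lambda = \mu^r$, and your observation that the hypothesis $p_- \geq 1$ plays no role in the identity itself is also accurate.
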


\begin{lemma} \label{lemma22} (see \cite{CF}, page 24) Given $p(\cdot) \in
\mathcal{P}$, then for all $f\in L^{p(\cdot)}$ and
$\|f\|_{p(\cdot)}\neq 0$, we have
$$\int_\Omega \left| \frac{f(x)}{\|f\|_{p(\cdot)}} \right|^{p(x)} d\mathbb{P}=1.$$
\end{lemma}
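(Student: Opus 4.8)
The plan is to work directly with the modular $\rho(g)=\int_\Omega|g(x)|^{p(x)}\,d\mathbb{P}$ and with the defining formula $\lambda_0:=\|f\|_{p(\cdot)}=\inf\{\lambda>0:\rho(f/\lambda)\le 1\}$. Since $f\in L^{p(\cdot)}$ the set over which the infimum is taken is nonempty, and since $\|f\|_{p(\cdot)}\neq 0$ we have $\lambda_0\in(0,\infty)$, so $f/\lambda_0$ is well defined. It then suffices to prove the two inequalities $\rho(f/\lambda_0)\le 1$ and $\rho(f/\lambda_0)\ge 1$.

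For the first inequality I would choose a sequence $\lambda_n\downarrow\lambda_0$ with $\rho(f/\lambda_n)\le 1$, which is possible by the definition of the infimum. For each fixed $x$ the map $\lambda\mapsto(|f(x)|/\lambda)^{p(x)}$ is nonincreasing, so $(|f(x)|/\lambda_n)^{p(x)}\uparrow(|f(x)|/\lambda_0)^{p(x)}$ as $n\to\infty$. The monotone convergence theorem then gives $\rho(f/\lambda_0)=\lim_n\rho(f/\lambda_n)\le 1$.

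For the reverse inequality I would argue by contradiction: suppose $\rho(f/\lambda_0)<1$. For $0<\lambda<\lambda_0$ write $(|f(x)|/\lambda)^{p(x)}=(\lambda_0/\lambda)^{p(x)}(|f(x)|/\lambda_0)^{p(x)}$ and use $\lambda_0/\lambda>1$ together with the standing hypothesis $p_+<\infty$ to estimate $(\lambda_0/\lambda)^{p(x)}\le(\lambda_0/\lambda)^{p_+}$. This yields $\rho(f/\lambda)\le(\lambda_0/\lambda)^{p_+}\rho(f/\lambda_0)$, and letting $\lambda\uparrow\lambda_0$ the right-hand side tends to $\rho(f/\lambda_0)<1$; hence $\rho(f/\lambda)\le 1$ for some $\lambda<\lambda_0$, contradicting that $\lambda_0$ is the infimum. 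Therefore $\rho(f/\lambda_0)\ge 1$, and combining the two bounds gives $\rho(f/\lambda_0)=1$.

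The only genuine subtlety I anticipate is the use of $p_+<\infty$ in the reverse inequality: without it the factor $(\lambda_0/\lambda)^{p(x)}$ need not remain uniformly bounded as $\lambda\uparrow\lambda_0$, and in fact the identity can fail in that case. The monotone convergence step for the first inequality is routine, but one should be mindful that it is valid even when $\rho(f/\lambda_0)$ is a priori only known to lie in $[0,+\infty]$.
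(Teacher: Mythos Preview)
Your proof is correct and is the standard argument for this unit-ball property of the Luxemburg norm. Note that the paper does not actually prove Lemma~\ref{lemma22}; it is quoted without proof from \cite{CF}, page~24, so there is no in-paper argument to compare against. Your observation that $p_+<\infty$ is essential for the reverse inequality is well taken and is consistent with the paper's standing assumption $0<p_-\le p_+<\infty$ stated in Section~2.
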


\begin{lemma} \label{lemma23} (see \cite{FZ}, Theorem 1.3 or \cite{CF}, page 22 ) Given $p(\cdot) \in
\mathcal{P}$ and $f\in L^{p(\cdot)}$, then we have
\begin{enumerate}
\item $\|f\|_{p(\cdot)}<1(=1,>1)$ if and only if $\rho(f)<1(=1,>1)$;

\item If $\|f\|_{p(\cdot)}>1$, then $\rho(f)^{1/p_+} \leq
\|f\|_{p(\cdot)} \leq \rho(f)^{1/p_-}$;

\item If $0<\|f\|_{p(\cdot)}\leq 1$, then $\rho(f)^{1/p_-} \leq
\|f\|_{p(\cdot)} \leq \rho(f)^{1/p_+}$.
\end{enumerate}
\end{lemma}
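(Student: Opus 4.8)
The plan is to derive the three statements from Lemma \ref{lemma22} together with the elementary scaling behaviour of the modular. First I would record the pointwise bound: for $c\ge 1$ we have $c^{p_-}\le c^{p(x)}\le c^{p_+}$ for every $x\in\Omega$, while for $0<c\le 1$ we have $c^{p_+}\le c^{p(x)}\le c^{p_-}$; multiplying by $|g(x)|^{p(x)}$ and integrating yields, for every measurable $g$,
$$c^{p_-}\rho(g)\le\rho(cg)\le c^{p_+}\rho(g)\quad(c\ge1),\qquad c^{p_+}\rho(g)\le\rho(cg)\le c^{p_-}\rho(g)\quad(0<c\le1).$$

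Next I would prove (1). If $\|f\|_{p(\cdot)}=0$ then $f\equiv0$, so $\rho(f)=0<1$ and there is nothing to check; hence assume $\lambda_0:=\|f\|_{p(\cdot)}>0$, so that $\rho(f/\lambda_0)=1$ by Lemma \ref{lemma22}. Writing $f=\lambda_0\,(f/\lambda_0)$ and applying the scaling inequalities with $c=\lambda_0$: if $\lambda_0>1$ then $\rho(f)\ge\lambda_0^{p_-}\rho(f/\lambda_0)=\lambda_0^{p_-}>1$; if $\lambda_0=1$ then $\rho(f)=\rho(f/\lambda_0)=1$; and if $0<\lambda_0<1$ then $\rho(f)\le\lambda_0^{p_-}\rho(f/\lambda_0)=\lambda_0^{p_-}<1$. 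Since these three cases for $\lambda_0$ are exhaustive and the conclusions $\rho(f)>1$, $\rho(f)=1$, $\rho(f)<1$ are mutually exclusive, all three equivalences in (1) follow at once.

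For (2) and (3) I would argue in the same way but keep the quantitative estimates. Suppose $\lambda_0:=\|f\|_{p(\cdot)}>0$ and recall $\rho(f/\lambda_0)=1$. If $\lambda_0>1$, the scaling inequalities with $c=\lambda_0\ge1$ give $\lambda_0^{p_-}\le\rho(f)\le\lambda_0^{p_+}$; taking $p_+$-th and $p_-$-th roots yields $\rho(f)^{1/p_+}\le\lambda_0\le\rho(f)^{1/p_-}$, which is (2). If $0<\lambda_0<1$, the scaling inequalities with $c=\lambda_0\le1$ give $\lambda_0^{p_+}\le\rho(f)\le\lambda_0^{p_-}$, hence $\rho(f)^{1/p_-}\le\lambda_0\le\rho(f)^{1/p_+}$; and if $\lambda_0=1$ then $\rho(f)=1$ by (1) and both bounds in (3) reduce to $1\le\lambda_0\le1$. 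This proves (3).

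The only substantive point is the input Lemma \ref{lemma22}, i.e. that the infimum defining $\|f\|_{p(\cdot)}$ is attained and the modular there equals exactly $1$. That fact relies on the continuity (and strict monotonicity) of $\lambda\mapsto\rho(f/\lambda)$ on the set where it is finite, which in turn uses $f\in L^{p(\cdot)}$ and, crucially, $p_+<\infty$ so that dominated convergence applies; if $p_+=\infty$ the modular may jump and $\rho(f/\|f\|_{p(\cdot)})$ can be strictly smaller than $1$. Since Lemma \ref{lemma22} is already at our disposal, the remaining argument is completely elementary; were one to build the result from scratch, establishing this continuity/attainment property would be the main obstacle.
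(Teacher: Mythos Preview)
Your argument is correct. The paper does not supply its own proof of this lemma; it simply cites \cite{FZ} and \cite{CF}, so there is no in-paper proof to compare against. Your derivation from Lemma~\ref{lemma22} via the elementary scaling inequalities $c^{p_-}\rho(g)\le\rho(cg)\le c^{p_+}\rho(g)$ for $c\ge1$ (and the reversed exponents for $0<c\le1$) is exactly the standard route and is sound under the paper's standing assumption $0<p_-\le p_+<\infty$.
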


\begin{lemma} \label{lemma24} (see \cite{CF}, H\"{o}lder's inequality) Given
$p(\cdot),\ q(\cdot),\ r(\cdot) \in \mathcal{P}$, such that
$$\frac{1}{p(x)} = \frac{1}{q(x)} + \frac{1}{r(x)}.$$
Then there exists a constant $C_{p(\cdot)}$ such that for all $f \in
L^{q(\cdot)}$, $g\in L^{r(\cdot)}$, and $\ fg \in L^{p(\cdot)}$
$$\|fg\|_{p(\cdot)} \leq C_{p(\cdot)} \|f\|_{q(\cdot)}\|g\|_{r(\cdot)}.$$
\end{lemma}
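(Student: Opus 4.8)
The plan is to reduce, via the positivity and homogeneity properties of $\|\cdot\|_{p(\cdot)}$ listed after the definition of the quasi-norm, to the case of normalized functions, and then to run a pointwise Young inequality with the variable exponents $q(\cdot)/p(\cdot)$ and $r(\cdot)/p(\cdot)$ at the level of the modular $\rho$. First I would dispatch the trivial cases: if $\|f\|_{q(\cdot)}=0$ or $\|g\|_{r(\cdot)}=0$, then $f=0$ or $g=0$ a.e.\ by positivity, hence $fg=0$ a.e.\ and there is nothing to prove. Otherwise, since $\|\cdot\|_{p(\cdot)}$ is homogeneous, replacing $f,g$ by $f/\|f\|_{q(\cdot)}$ and $g/\|g\|_{r(\cdot)}$ reduces matters to proving a bound $\|fg\|_{p(\cdot)}\le C_{p(\cdot)}$ under the normalization $\|f\|_{q(\cdot)}=\|g\|_{r(\cdot)}=1$.

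For the main step, note that since $\tfrac1{p(x)}=\tfrac1{q(x)}+\tfrac1{r(x)}$ and all three reciprocals are strictly positive, we have $q(x)>p(x)$ and $r(x)>p(x)$ for a.e.\ $x$, so $s(x):=q(x)/p(x)$ and $t(x):=r(x)/p(x)$ are genuine conjugate exponents: $s(x),t(x)>1$ and $\tfrac1{s(x)}+\tfrac1{t(x)}=p(x)\big(\tfrac1{q(x)}+\tfrac1{r(x)}\big)=1$. Applying the scalar Young inequality $ab\le a^{s}/s+b^{t}/t$ pointwise with $a=|f(x)|^{p(x)}$, $b=|g(x)|^{p(x)}$ and exponents $s(x),t(x)$ gives
$$|f(x)g(x)|^{p(x)}\le\frac{p(x)}{q(x)}|f(x)|^{q(x)}+\frac{p(x)}{r(x)}|g(x)|^{r(x)}\le|f(x)|^{q(x)}+|g(x)|^{r(x)}\quad\text{a.e.}$$
Integrating over $\Omega$ and using Lemma~\ref{lemma23}(1), which converts $\|f\|_{q(\cdot)}\le1$ into $\int_\Omega|f|^{q(x)}\,d\mathbb{P}\le1$ and likewise for $g$, we obtain the modular bound $\rho(fg)=\int_\Omega|fg|^{p(x)}\,d\mathbb{P}\le2$.

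It remains to turn this into a quasi-norm estimate. Since $p(x)/p_-\ge1$, we have $\rho\big(fg/2^{1/p_-}\big)=\int_\Omega|fg|^{p(x)}\,2^{-p(x)/p_-}\,d\mathbb{P}\le\tfrac12\rho(fg)\le1$, whence $\|fg\|_{p(\cdot)}\le2^{1/p_-}$ by Lemma~\ref{lemma23}(1); undoing the normalization yields the claim with, say, $C_{p(\cdot)}=2^{1/p_-}$. I do not anticipate a genuine obstacle here; the only points to watch are the strict inequalities $s(x),t(x)>1$ (so that the pointwise Young inequality is legitimate — this is exactly where the positivity of $1/q(x)$ and $1/r(x)$ enters) and the fact that, since $p_-$ may be below $1$, one should argue throughout with the modular $\rho$ together with homogeneity and positivity, rather than invoking Lemma~\ref{lemma21}, which requires $p_-\ge1$.
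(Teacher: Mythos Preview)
Your argument is correct and is precisely the standard proof: normalize via homogeneity, apply the pointwise Young inequality with the conjugate pair $q(x)/p(x)$ and $r(x)/p(x)$, and convert the resulting modular bound into a norm bound. The paper itself does not supply a proof of this lemma at all --- it is stated with a citation to \cite{CF} --- so there is nothing further to compare; your write-up essentially reproduces the textbook argument that the citation points to.
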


Now we introduce some standard notations from martingale theory. We refer to \cite{Gar,LRL, FW1} for the classical
martingale space theory. Let
$(\Omega,\mathcal {F},\mathbb{P})$ be a complete probability space.
Recall that the conditional expectation operator relative to
${\mathcal{F}_n}$ is denoted by $\E_{\mathcal{F}_n}$, i.e.
$\E(f|{\mathcal{F}_n})=\E_{\mathcal{F}_n}(f)$. A sequence of
measurable functions $f=(f_n)_{n\geq0}\subset L^1(\Omega)$ is called
a martingale with respect to $(\mathcal {F}_n)$ if $\E_{\mathcal
{F}_n}(f_{n+1})=f_n$ for every $n\geq0.$ If in addition $f_n\in
L^{p(\cdot)}$, $f$ is called an $L^{p(\cdot)}$-martingale with
respect to $(\mathcal {F}_n)$. In this case we set
$$\|f\|_{p(\cdot)}=\sup_{n\geq0}\|f_n\|_{p(\cdot)}.$$ If
$\|f\|_{p(\cdot)}<\infty$, $f$ is called a bounded
$L^{p(\cdot)}$-martingale and denoted by $f\in L^{p(\cdot)}$. For a
martingale relative to $(\Omega,\mathcal {F},\mathbb{P};(\mathcal
{F}_n)_{n\geq 0})$, define the maximal function and the conditional square
function of $f$ respectively as follows $(f_{-1}=f_0)$,
$$M_mf=\sup_{n\leq m}{|f_n|}, ~~~~Mf=\sup_{n\geq1} |f_n|,$$
$$s_m(f)=\left(\sum\limits_{n=0}^{m}\E_{\mathcal{F}_{n-1}}|df_n|^2\right)^{\frac{1}{2}},
~~~~s(f)=\left(\sum\limits_{n=0}^{\infty}\E_{\mathcal{F}_{n-1}}|df_n|^2\right)^{\frac{1}{2}}.$$
%$$S_m(f)=\left(\sum_{n=0}^m|df_n|^2\right)^{1/2},~~~S(f)=\left(\sum_{n=0}^{\infty}|df_n|^2\right)^{1/2}.
%$$
Then we define the variable exponent martingale Hardy spaces
analogous to
classical martingale Hardy spaces as follows
$$H^*_{p(\cdot)}=\Big\{f=(f_n)_{n\geq0}: Mf\in L^{p(\cdot)}\Big\},~~~~~~\|f\|_{H^*_{p(\cdot)}}={\|Mf\|}_{p(\cdot)}.$$
$$H^s_{p(\cdot)}=\Big\{f=(f_n)_{n\geq0}: s(f)\in L^{p(\cdot)}\Big\},~~~~~~\|f\|_{H^s_{p(\cdot)}}={\|s(f)\|}_{p(\cdot)}.$$

\section{Doob's maximal inequalities}

In this section we first prove the weak type inequality \eqref{f2} without the condition \eqref{f1}. We begin with the  following lemma.

\begin{lemma}\label{lemma31} Given $p(\cdot)\in \mathcal{P}$ and
$1\leq p_-\leq p_+ < \infty$. Let $f=(f_n)_{0\leq n \leq \infty}$ be a
$L^{p(\cdot)}$-martingale. Suppose that for any stopping time $\tau$
$$\mathbb{P}(\tau<\infty)<\int_{\{\tau<\infty\}}\frac{|f_{\infty}|}{\lambda}d\mathbb{P}, \quad \forall\lambda>0.$$
Then there exists a constant $C_{p(\cdot)}$ such that
$$
\mathbb{P}(\tau<\infty)\leq C_{p(\cdot)}
\int_{\{\tau<\infty\}}\Bigg(\frac{|f_{\infty}|}{\lambda}\Bigg)^{p(x)}d{\mathbb{P}}, \quad \forall\lambda>0.$$ \end{lemma}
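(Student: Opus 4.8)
The plan is to reduce the claimed $p(\cdot)$-estimate to the given $L^1$-type hypothesis by splitting the set $\{\tau<\infty\}$ according to the size of $|f_\infty|/\lambda$. Fix $\lambda>0$ and write $g=|f_\infty|/\lambda$. Decompose $\{\tau<\infty\}$ into $E_1=\{\tau<\infty\}\cap\{g\le 1\}$ and $E_2=\{\tau<\infty\}\cap\{g>1\}$. On $E_2$ we simply have $g\le g^{p(x)}$ since $p(x)\ge p_-\ge 1$, so
\begin{equation}\label{eq:E2}
\int_{E_2} g\, d\mathbb{P}\le \int_{E_2} g^{p(x)}\, d\mathbb{P}\le \int_{\{\tau<\infty\}} g^{p(x)}\, d\mathbb{P}.
\end{equation}
The genuine work is on $E_1$, where $g^{p(x)}$ may be much smaller than $g$.

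To handle $E_1$, I would introduce a second stopping time that throws away the part of $\{\tau<\infty\}$ where $|f_\infty|$ is too small to matter. The idea is: apply the hypothesis not to $\tau$ itself but to a modified stopping time $\sigma$ whose stopped set is essentially $\{\tau<\infty\}\cap\{g \text{ not too small}\}$. Concretely, one wants a threshold $\delta\in(0,1)$ and a stopping time $\sigma$ with $\{\sigma<\infty\}\subset\{\tau<\infty\}$ such that on $\{\tau<\infty\}\setminus\{\sigma<\infty\}$ one has $g\le\delta$, hence $\int_{\{\tau<\infty\}\setminus\{\sigma<\infty\}} g\,d\mathbb{P}\le \delta\,\mathbb{P}(\tau<\infty)$. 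Since the hypothesis holds for \emph{every} stopping time, we get $\mathbb{P}(\sigma<\infty)<\int_{\{\sigma<\infty\}} g\,d\mathbb{P}$ as well; combining, one absorbs the $\delta\,\mathbb{P}(\tau<\infty)$ term into the left-hand side and is left to estimate $\int_{\{\sigma<\infty\}} g\,d\mathbb{P}$, an integral over a set on which $g\gtrsim\delta$. There, because $g\gtrsim\delta$, we have $g = g\cdot g^{0}\le g\cdot(g/\delta)^{p(x)-1}$ when... actually more directly: on $\{g\ge\delta\}$, $g\le \delta^{1-p_+} g^{p(x)}$ when $p(x)\le p_+$ (using $g^{p(x)}/g = g^{p(x)-1}\ge \delta^{p(x)-1}\ge \delta^{p_+-1}$). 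This yields $\int_{\{\sigma<\infty\}} g\,d\mathbb{P}\le \delta^{1-p_+}\int_{\{\tau<\infty\}} g^{p(x)}\,d\mathbb{P}$.

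Assembling the pieces: $\mathbb{P}(\tau<\infty)\le \int_{E_1} g + \int_{E_2} g$ by the hypothesis applied to $\tau$, the $E_2$ integral is bounded by \eqref{eq:E2}, and the $E_1$ integral is split as the ``small'' part $\le\delta\,\mathbb{P}(\tau<\infty)$ plus the part over $\{\sigma<\infty\}$ bounded by $\delta^{1-p_+}\int_{\{\tau<\infty\}} g^{p(x)}\,d\mathbb{P}$. Choosing $\delta=1/2$ lets us absorb $\delta\,\mathbb{P}(\tau<\infty)$ on the left, producing
$$\mathbb{P}(\tau<\infty)\le C_{p(\cdot)}\int_{\{\tau<\infty\}} g^{p(x)}\,d\mathbb{P}$$
with $C_{p(\cdot)}$ depending only on $p_+$. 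The main obstacle is the construction of the auxiliary stopping time $\sigma$ with the property that its stopped set captures precisely $\{\tau<\infty\}$ cut down to where $|f_\infty|\ge\delta\lambda$: since $\{|f_\infty|\ge\delta\lambda\}$ is only $\mathcal{F}_\infty$-measurable rather than adapted, one must instead use the maximal function of the martingale $g\chi_{\{\tau<\infty\}}$ (or the martingale $(\E_{\mathcal{F}_n}(|f_\infty|\chi_{\{\tau<\infty\}}))_n$) and define $\sigma$ as the first time this maximal process exceeds $\delta\lambda$; standard martingale arguments then show $\{\tau<\infty\}\setminus\{\sigma<\infty\}$ is contained (up to a null set) in a set where $|f_\infty|$ is controlled, which is exactly what is needed. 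Verifying this containment carefully, using $\E_{\mathcal{F}_n}(a)=0$ for $n\le\tau$-type orthogonality together with the martingale convergence theorem, is the delicate step; everything else is the elementary splitting above.
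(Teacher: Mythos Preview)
Your instinct to bound $\int_{\{\tau<\infty\}} g$ by a small multiple of $\mathbb{P}(\tau<\infty)$ plus a multiple of $\int_{\{\tau<\infty\}} g^{p(x)}$ and then absorb is correct, and is precisely the mechanism the paper uses. But the auxiliary stopping time $\sigma$ is an unnecessary detour, and as you sketch it, the construction does not deliver what you need: if $\sigma$ is the first time the martingale generated by $g\chi_{\{\tau<\infty\}}$ exceeds $\delta$, then on $\{\sigma<\infty\}$ you only know the \emph{maximal function} exceeds $\delta$, not $g$ itself, so the pointwise bound $g\le\delta^{1-p_+}g^{p(x)}$ is unavailable there; nor is $\{\sigma<\infty\}\subset\{\tau<\infty\}$ automatic. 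And the hypothesis applied to $\sigma$ never actually enters your bookkeeping.

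The fix is simply to drop $\sigma$. The level-set split $\{\tau<\infty\}=\{\tau<\infty,\,g\le\tfrac12\}\cup\{\tau<\infty,\,g>\tfrac12\}$ is a measurable partition; no adaptedness is required because the hypothesis is invoked only for $\tau$. On $\{g>\tfrac12\}$ one has $g^{p(x)}=g\cdot g^{p(x)-1}\ge g\cdot 2^{1-p_+}$, so directly
\[
\mathbb{P}(\tau<\infty)<\int_{\{\tau<\infty\}}g\,d\mathbb{P}\le \tfrac12\,\mathbb{P}(\tau<\infty)+2^{\,p_+-1}\!\int_{\{\tau<\infty\}}g^{p(x)}\,d\mathbb{P},
\]
giving the claim with $C_{p(\cdot)}=2^{p_+}$.

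For comparison, the paper reaches the same absorption via Young's inequality, carried out through a simple-function approximation of $p(\cdot)$: effectively $g\le g^{p(x)}/p(x)+1/p'(x)$ pointwise, whence
\[
\int_{\{\tau<\infty\}} g\,d\mathbb{P}\le \frac{1}{p_-}\int_{\{\tau<\infty\}}g^{p(x)}\,d\mathbb{P}+\frac{1}{(p_+)'}\,\mathbb{P}(\tau<\infty),
\]
with the factor $1/(p_+)'<1$ absorbed into the left side. Your level-set argument is the same idea in cruder form; it gives a larger constant ($2^{p_+}$ versus $p_+/p_-$) but is more elementary and avoids the approximation step.
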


\begin{proof} We choose a sequence of simple functions $\{s_n\}_{n\geq1}$
such that $p_+(\{\tau<\infty\}) \geq s_n\geq p_-(\{\tau<\infty\}) $ for any $n$ and the
sequence $\{s_n\}_{n\geq1}$ increases monotonically to $p(x)$ on
${\{\tau<\infty\}}$. Then for each $n$
$$s_n(x)=\sum\limits_{j=1}^{k_n}\alpha_{n,j}\chi_{A_{n,j}}(x),$$
where the sets $\{A_{n,j}\}$ are disjoint and
$\bigcup_j{A_{n,j}}=\{\tau<\infty\}$.

By H\"{o}lder's inequality and Young's inequality we have
\begin{eqnarray*}
\int_{A_{n,j}}\frac{|f_\infty(x)|}{\lambda}d{\mathbb{P}} &\leq&
\left(\int_{A_{n,j}}\Bigg(\frac{|f_\infty(x)|}{\lambda}\Bigg)^{\alpha_{n,j}}d{\mathbb{P}}\right)^{\frac{1}{\alpha_{n,j}}}
\mathbb{P}(A_{n,j})^{\frac{1}{\alpha_{n,j}'}}
\\&\leq& \frac{1}{\alpha_{n,j}}\int_{A_{n,j}}\Bigg(\frac{|f_\infty(x)|}{\lambda}\Bigg)^{\alpha_{n,j}}d{\mathbb{P}}+{\frac{\mathbb{P}({A_{n,j})}}{\alpha_{n,j}'}}
\\&\leq&\frac{1}{p_-(\{\tau<\infty\})}\int_{A_{n,j}}\Big(\frac{|f_\infty(x)|}{\lambda}\Big)^{s_n(x)}d{\mathbb{P}}+ {\frac{\mathbb{P}({A_{n,j})}}{\big(p_+\big(\{\tau<\infty\}\big)\big)'}}.
\end{eqnarray*}
Adding the above inequalities with $j$ from $1$ to $k_n$, we have
$$\int_{\{\tau<\infty\}}\frac{|f_\infty(x)|}{\lambda}d\mathbb{P}\leq \frac{1}{p_-\big(\{\tau<\infty\}\big)}\int_{\{\tau<\infty\}}\Big(\frac{|f_\infty(x)|}{\lambda}\Big)^{s_n(x)}d{\mathbb{P}}+ {\frac{\mathbb{P}({\tau<\infty)}}{\big(p_+\big(\{\tau<\infty\}\big)\big)'}}.$$
This inequality holds for all $n$, hence the monotone convergence
theorem implies that
\begin{multline}\mathbb{P}(\tau<\infty)< \int_{\{\tau<\infty\}} \frac{|f_\infty|}{\lambda}d\mathbb{P}\\ \leq \frac{1}{p_-\big(\{\tau<\infty\}\big)} \int_{\{\tau<\infty\}}\Big(\frac{|f_\infty(x)|}{\lambda}\Big)^{p(x)}d{\mathbb{P}}+ {\frac{\mathbb{P}({\tau<\infty)}}{\big(p_+\big(\{\tau<\infty\}\big)\big)'}}.
\end{multline}
Since $p_+<\infty$, then $\big(p_+(\{\tau<\infty\})\big)'>1$. It
follows that
$$\mathbb{P}(\tau<\infty)\Big(1-{\frac{1}{(p_+(\{\tau<\infty\}))'}}\Big) \leq \frac{1}{p_-\big(\{\tau<\infty\}\big)} \int_{\{\tau<\infty\}}\Bigg(\frac{|f_\infty(x)|}{\lambda}\Bigg)^{p(x)}d{\mathbb{P}}.$$
Therefore, by a simple calculation, we have
$$\mathbb{P}(\tau<\infty)\leq C_{p(\cdot)} \int_{\{\tau<\infty\}}\Bigg(\frac{|f_\infty(x)|}{\lambda}\Bigg)^{p(x)}d{\mathbb{P}}.$$
\end{proof}

The following theorem corresponds to Proposition 4 in \cite{HA}.

\begin{theorem} Given $p(\cdot)\in \mathcal{P}$ and
$1\leq p_-\leq p_+ < \infty$. Suppose that $f=(f_n)_{0\leq n\leq\infty}$ is a
bounded $L^{p(\cdot)}$-martingale, then
$$\mathbb{P}(Mf>\lambda)\leq {C_{p(\cdot)} \int_{\Omega}\Bigg(\frac{|f_\infty(x)|}{\lambda}\Bigg)^{p(x)}d{\mathbb{P}}},\quad \forall \lambda>0.$$ \end{theorem}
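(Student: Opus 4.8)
The plan is to reduce the weak type maximal inequality to the hypothesis of Lemma~\ref{lemma31} via a stopping time argument. First I would fix $\lambda>0$ and define the stopping time
$$\tau = \tau_\lambda = \inf\{n\geq 1 : |f_n| > \lambda\},$$
with the convention $\inf\emptyset = \infty$. By the very definition of the maximal function, $\{Mf>\lambda\} = \{\tau<\infty\}$, so it suffices to estimate $\mathbb{P}(\tau<\infty)$. On the set $\{\tau<\infty\}$ we have $|f_\tau| > \lambda$, i.e. $\mathbb{P}(\tau<\infty) < \lambda^{-1}\int_{\{\tau<\infty\}}|f_\tau|\,d\mathbb{P}$ (the strict inequality is harmless since we may pass to $\lambda - \varepsilon$, or note the atom-level strictness; in any case one gets $\leq$ with a harmless constant, which is all Lemma~\ref{lemma31} needs after adjusting the constant).

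Next I would replace $f_\tau$ by $f_\infty$ using the martingale property. Since $\{\tau\le n\}\in\mathcal{F}_n$ and $f$ is a bounded $L^{p(\cdot)}$-martingale, the optional stopping / conditional-expectation identity gives $\E_{\mathcal{F}_\tau}(f_\infty) = f_\tau$ on $\{\tau<\infty\}$ (more precisely $f_{n\wedge\tau} = \E_{\mathcal{F}_{n\wedge\tau}}(f_\infty)$, and one lets $n\to\infty$; boundedness in $L^{p(\cdot)}$ and $1\le p_-$ gives enough $L^1$-control to justify the limit and the uniform integrability). Hence
$$\int_{\{\tau<\infty\}}|f_\tau|\,d\mathbb{P} = \int_{\{\tau<\infty\}}|\E_{\mathcal{F}_\tau}(f_\infty)|\,d\mathbb{P} \leq \int_{\{\tau<\infty\}}\E_{\mathcal{F}_\tau}(|f_\infty|)\,d\mathbb{P} = \int_{\{\tau<\infty\}}|f_\infty|\,d\mathbb{P},$$
where the last equality uses that $\{\tau<\infty\}\in\mathcal{F}_\tau$. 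Combining, $\mathbb{P}(\tau<\infty) < \lambda^{-1}\int_{\{\tau<\infty\}}|f_\infty|\,d\mathbb{P}$, which is exactly the hypothesis of Lemma~\ref{lemma31} applied to the stopping time $\tau$.

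Finally, Lemma~\ref{lemma31} yields $\mathbb{P}(\tau<\infty)\leq C_{p(\cdot)}\int_{\{\tau<\infty\}}(|f_\infty|/\lambda)^{p(x)}\,d\mathbb{P}\leq C_{p(\cdot)}\int_\Omega(|f_\infty|/\lambda)^{p(x)}\,d\mathbb{P}$, and since $\{Mf>\lambda\}=\{\tau<\infty\}$ this is the assertion. The main obstacle I anticipate is the justification of the optional-stopping identity $f_\tau = \E_{\mathcal{F}_\tau}(f_\infty)$ on $\{\tau<\infty\}$ at the level of generality needed: one must argue that a bounded $L^{p(\cdot)}$-martingale with $1\le p_-$ is closable in $L^1$ (or at least that $f_{n\wedge\tau}\to f_\tau$ in $L^1$ on $\{\tau<\infty\}$ and the conditional expectations converge), which requires knowing that $\|\cdot\|_{L^1}\lesssim\|\cdot\|_{p(\cdot)}$ on a probability space together with a uniform-integrability estimate; everything else is the stopping-time bookkeeping and a direct appeal to the preceding lemma.
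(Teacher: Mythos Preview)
Your proposal is correct and follows essentially the same route as the paper: define the stopping time $\tau=\inf\{n:|f_n|>\lambda\}$, identify $\{Mf>\lambda\}=\{\tau<\infty\}$, use $|f_\tau|>\lambda$ together with $|f_\tau|\le \E_{\mathcal{F}_\tau}(|f_\infty|)$ to obtain the hypothesis of Lemma~\ref{lemma31}, and conclude. The paper compresses your steps~3--4 into the single observation that $\E_{\mathcal{F}_\tau}(|f_\infty|/\lambda)>1$ a.e.\ on $\{\tau<\infty\}$ and then integrates, but the content is identical; your discussion of closability/optional stopping is the only place you elaborate more than the paper does, and the paper simply takes that identity for granted.
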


\begin{proof}For any $\lambda>0$, we define a stopping time
$\tau=\inf\{n>0:|f_n|>\lambda\}$ ( with the convention that
$\inf\varnothing=\infty$). It is obvious that
$$\{Mf>\lambda\}=\{\tau<\infty\},$$
and
$$\{\tau<\infty\}\subset\{|f_{\tau}|>\lambda\}.$$
Note that
${\E_{\mathcal{F}_{\tau}}\Big({\frac{|f_\infty|}{\lambda}}\Big)}>1$ a.e. on
the set $\{\tau<\infty\}$. We get that
\begin{eqnarray*}
\mathbb{P}(\tau<\infty)&=&\int_{\{\tau<\infty\}}1d{\mathbb{P}}
\leq\int_{\{\tau<\infty\}}{\E_{\mathcal{F}_{\tau}}\Bigg({\frac{|f_\infty|}{\lambda}}\Bigg)}d{\mathbb{P}}
\\&=&\int_{\{\tau<\infty\}}{\frac{|f_\infty(x)|}{\lambda}}d{\mathbb{P}}.
\end{eqnarray*}
It  follows immediately from  Lemma \ref{lemma31} that
\begin{eqnarray*}
\mathbb{P}(Mf>\lambda)&=&\mathbb{P}(\tau<\infty)
\leq C_{p(\cdot)}\int_{\{\tau<\infty\}}\Bigg(\frac{|f_\infty(x)|}{\lambda}\Bigg)^{p(x)}d{\mathbb{P}}
\\&\leq& C_{p(\cdot)}\int_{\Omega}\Bigg(\frac{|f_\infty(x)|}{\lambda}\Bigg)^{p(x)}d{\mathbb{P}}.
\end{eqnarray*}
The proof is complete.
\end{proof}

\begin{lemma} Given $p(\cdot)\in \mathcal{P}$. Then
$$\left(\sup_{n\geq0}|f_n|\right)^{p(\cdot)}=\sup_{n\geq0}\left(|f_n|^{p(\cdot)}\right).$$ \end{lemma}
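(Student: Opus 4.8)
The plan is to prove the pointwise identity $\bigl(\sup_{n\geq0}|f_n|\bigr)^{p(\cdot)}=\sup_{n\geq0}\bigl(|f_n|^{p(\cdot)}\bigr)$ by working at a fixed point $x\in\Omega$, where $p(x)$ is just a fixed positive real number and everything reduces to a statement about monotone functions on $[0,\infty)$. First I would fix $x$ and set $t\mapsto t^{p(x)}$; since $p(x)>0$, this map is strictly increasing and continuous on $[0,\infty)$. Write $a_n=|f_n(x)|\geq0$ and $M=\sup_{n\geq0}a_n$ (possibly $+\infty$, in which case both sides are $+\infty$ and there is nothing to prove, so assume $M<\infty$).

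The core of the argument is the elementary fact that a nondecreasing continuous function $\varphi$ on $[0,\infty)$ commutes with suprema of bounded sequences: $\varphi(\sup_n a_n)=\sup_n \varphi(a_n)$. The inequality $\sup_n\varphi(a_n)\leq\varphi(\sup_n a_n)$ is immediate from monotonicity, since $a_n\leq M$ for all $n$ gives $\varphi(a_n)\leq\varphi(M)$. For the reverse inequality, pick a subsequence $a_{n_k}\to M$; by continuity $\varphi(a_{n_k})\to\varphi(M)$, whence $\sup_n\varphi(a_n)\geq\varphi(M)=\varphi(\sup_n a_n)$. Applying this with $\varphi(t)=t^{p(x)}$ yields $\bigl(\sup_{n\geq0}|f_n(x)|\bigr)^{p(x)}=\sup_{n\geq0}|f_n(x)|^{p(x)}$, and since $x$ was arbitrary this is exactly the claimed identity of measurable functions.

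I expect there to be essentially no obstacle here; the only mild point worth a sentence is handling the case $M=+\infty$ (then $\sup_n a_n^{p(x)}=+\infty$ too, using $p(x)>0$), and noting that measurability of both sides is automatic since each $f_n$ is measurable and countable suprema and the power function preserve measurability. One could also phrase the whole thing without subsequences by observing that for any $\varepsilon>0$ there is $n$ with $a_n>M-\varepsilon$, hence $\sup_n a_n^{p(x)}\geq (M-\varepsilon)^{p(x)}$, and let $\varepsilon\to0$ using continuity of $t\mapsto t^{p(x)}$ at $t=M$; this avoids even extracting a subsequence and is perhaps the cleanest write-up.
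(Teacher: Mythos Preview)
Your argument is correct; the pointwise reduction together with monotonicity and continuity of $t\mapsto t^{p(x)}$ for $p(x)>0$ gives exactly the claimed identity, and your handling of the $M=+\infty$ case and of measurability is fine. The paper itself does not give a proof at all, stating only that the lemma is ``very obvious,'' so your write-up simply fills in the routine details the authors chose to omit.
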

This lemma is very obvious, however it will be used frequently below.

We now turn to consider the strong type inequality \eqref{f3}.
Let $(\Omega,\mathcal{F},\mathbb{P})$ be a probability space. Let
$$\mathcal{D}_n=\{A_j^n\}_{j\geq1},~~~~~~~ \mbox{for each}~n\geq0,$$
be decompositions of $\Omega$ such that $(\mathcal{B}_n)_{n\geq0}=\left(\sigma(\mathcal{D}_n)\right)_{n\geq0}$ is increasing and  $\mathcal
 {F}=\mathcal {\sigma }\left(\bigcup_{n\geq 0}\mathcal {B}_n \right)$.
It is clear that
$$\E_{\mathcal{B}_n}(f)=\sum_{j=1}^\infty \left( \frac{1}{\mathbb{P}\left( A_j^n\right)} \int_{A_j^n}f(x) d\mathbb{P} \right) \chi_{A_j^n}.$$
Then
\begin{eqnarray} \label{32}
\int_{\Omega}(Mf)^{p(x)} d\mathbb{P} &\leq&  \int_{\Omega} \sup_n \left\{ \sum_{j=1}^\infty \Big( \frac{1}{\mathbb{P}\Big( A_j^n\Big)} \int_{A_j^n}|f(x)| d\mathbb{P} \Big) \chi_{A_j^n} \right\}^{p(x)} d\mathbb{P} \nonumber
\\&=& \int_{\Omega}  \left\{ \sup_{n}\sum_{j=1}^\infty \Big( \frac{1}{\mathbb{P}\Big( A_j^n\Big)} \int_{A_j^n}|f(x)| d\mathbb{P} \Big)^{\frac{p(x)}{p_-}} \chi_{A_j^n}  \right\}^{p_-} d\mathbb{P}.
\end{eqnarray}

\begin{lemma} \label{lemma34} Let $p(\cdot)\in \mathcal{P}$, $1< p_-\leq p_+ < \infty$ and satisfy \eqref{65}.
Suppose that $f\in L^{p(\cdot)}$ and $\left\|f\right\|_{p(\cdot)}\leq 1/2$. Then for all measurable sets $B$,
$$\left(\frac{1}{\mathbb{P}(B)} \int_B |f(x)| d\mathbb{P}\right)^{\frac{p(x)}{p_-}} \leq
K\left(\frac{1}{\mathbb{P}(B)} \int_B |f(x)|^{\frac{p(x)}{p_-}} d\mathbb{P}+1\right).$$ \end{lemma}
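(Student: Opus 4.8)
The plan is to reduce everything to a pointwise comparison of the quantity
$\big(\frac{1}{\mathbb{P}(B)}\int_B|f|\,d\mathbb{P}\big)^{p(x)/p_-}$ against its "linearized'' version $\frac{1}{\mathbb{P}(B)}\int_B|f|^{p(x)/p_-}\,d\mathbb{P}$, exploiting that the exponent $p(x)/p_-\ge 1$. First I would split the analysis according to the size of the average $m_B:=\frac{1}{\mathbb{P}(B)}\int_B|f|\,d\mathbb{P}$. On the range $m_B\le 1$ the map $t\mapsto t^{p(x)/p_-}$ applied to $m_B$ is dominated by $m_B$ itself (since $p(x)/p_- \ge 1$ forces $m_B^{p(x)/p_-}\le m_B\le 1$), which already gives the bound with constant $1$ after noting $m_B \le \frac{1}{\mathbb{P}(B)}\int_B|f|\,d\mathbb{P}\le \big(\frac{1}{\mathbb{P}(B)}\int_B|f|^{p(x)/p_-}d\mathbb{P}\big)^{?}$ — but this last step is not automatic, so the honest argument is the one below. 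The substantive case is $m_B>1$.

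When $m_B>1$, I would first control $m_B$ using the hypothesis $\|f\|_{p(\cdot)}\le 1/2<1$. By Lemma~\ref{lemma23}(1) and (3), $\|f\|_{p(\cdot)}<1$ gives $\rho(f)=\int_\Omega|f|^{p(x)}d\mathbb{P}\le 1$, and more precisely $\rho(f)\le\|f\|_{p(\cdot)}^{p_-}\le 2^{-p_-}\le 1/2$. Hence by Hölder's inequality on $B$ with exponents $p(x)$ and $p'(x)$ (used in the simple-function / monotone-convergence form already employed in the proof of Lemma~\ref{lemma31}, since $p(\cdot)$ need not be constant on $B$),
\[
\int_B|f|\,d\mathbb{P}\ \lesssim\ \Big(\int_B|f|^{p_-(B)}d\mathbb{P}\Big)^{1/p_-(B)}\mathbb{P}(B)^{1/(p_-(B))'}\ \le\ \big(\rho(f)+\mathbb{P}(B)\big)^{?},
\]
so that $m_B$ is at most a constant times a negative power of $\mathbb{P}(B)$; the key point is that $m_B>1$ implies $\mathbb{P}(B)$ is bounded below in terms of $\int_B|f|$, equivalently $\int_B|f|^{p(x)/p_-}d\mathbb{P}$ is comparable to $\mathbb{P}(B)\cdot m_B^{\,p(x)/p_-}$ up to the oscillation of $p$ over $B$. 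This is precisely where condition \eqref{65} enters: the ratio between $m_B^{p_+(B)/p_-}$ and $m_B^{p_-(B)/p_-}$, and between the corresponding averages, is governed by $m_B^{(p_+(B)-p_-(B))/p_-}$, which — once we know $m_B\lesssim \mathbb{P}(B)^{-c}$ — is bounded by $\mathbb{P}(B)^{-(p_+(B)-p_-(B))\cdot c/p_-}\le K^{c/p_-}$ by \eqref{65}. Thus the uniform oscillation bound \eqref{65} converts the pointwise-varying exponent $p(x)/p_-$ into a constant-exponent estimate up to the factor $K$.

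Concretely, for $m_B>1$ I would write, using Jensen's inequality (the function $t\mapsto t^{p_+(B)/p_-}$ is convex since $p_+(B)/p_-\ge 1$) and then \eqref{65},
\[
m_B^{\,p(x)/p_-}\ \le\ m_B^{\,p_+(B)/p_-}\ \le\ \Big(\frac{1}{\mathbb{P}(B)}\int_B|f|^{p_+(B)/p_-}d\mathbb{P}\Big)\ \le\ \mathbb{P}(B)^{(p_-(B)-p_+(B))/p_-}\,\frac{1}{\mathbb{P}(B)}\int_B|f|^{p(x)/p_-}d\mathbb{P},
\]
where in the last step I bound $|f|^{p_+(B)/p_-}\le |f|^{p(x)/p_-}$ on the part of $B$ where $|f|\ge 1$ and handle $\{|f|<1\}\cap B$ by the crude bound $|f|^{p_+(B)/p_-}\le 1$, contributing the "$+1$'' term; the prefactor $\mathbb{P}(B)^{(p_-(B)-p_+(B))/p_-}\le K^{1/p_-}\lesssim K$ by \eqref{65}. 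Combining the two cases $m_B\le 1$ and $m_B>1$ yields the asserted inequality with the single constant $K$ (absorbing the $p_-$-dependent exponents, which are themselves controlled by $K$).

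The main obstacle I anticipate is handling the oscillation of $p(x)$ on $B$ cleanly: one cannot simply freeze $p(x)$ at $p_-(B)$ or $p_+(B)$ without paying the factor $\mathbb{P}(B)^{p_-(B)-p_+(B)}$, and the whole point of \eqref{65} is that this factor is harmless. The delicate bookkeeping is to make sure that every time we trade the variable exponent for $p_\pm(B)$ we do so in the direction that produces $\mathbb{P}(B)$ to the power $p_-(B)-p_+(B)$ (negative oscillation), never its reciprocal; a sign error there would require an upper bound on $\mathbb{P}(B)^{p_+(B)-p_-(B)}$, which \eqref{65} does not provide. A secondary technical point is justifying Hölder's inequality and Jensen's inequality with the non-constant exponent $p(x)/p_-$ on $B$ — but this is exactly the simple-function approximation plus monotone convergence device already used in Lemma~\ref{lemma31}, so it can be invoked rather than redone.
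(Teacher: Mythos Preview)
Your ``concretely'' step contains a sign error that breaks the argument. After Jensen with the \emph{upper} constant exponent $p_+(B)/p_-$ you claim that on $\{|f|\ge 1\}\cap B$ one has $|f|^{p_+(B)/p_-}\le |f|^{p(x)/p_-}$. But $p_+(B)\ge p(x)$, so for $|f|\ge 1$ the inequality goes the other way: $|f|^{p_+(B)/p_-}\ge |f|^{p(x)/p_-}$. There is no way to pass from $\int_B|f|^{p_+(B)/p_-}$ down to $\int_B|f|^{p(x)/p_-}$ by a pointwise split on $\{|f|\gtrless 1\}$, and the prefactor $\mathbb{P}(B)^{(p_-(B)-p_+(B))/p_-}$ you write has no source in the argument as stated. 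This is exactly the ``sign error'' you warn about in your last paragraph: by freezing at $p_+(B)$ rather than $p_-(B)$ you have put yourself on the wrong side of the trade. Note also that your concrete chain never invokes the hypothesis $\|f\|_{p(\cdot)}\le 1/2$, which should already be a warning that the argument is incomplete.

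The paper avoids this by working with the \emph{lower} constant exponent $q_-(B)=p_-(B)/p_-$ throughout. First one shows (from $\|f\|_{p(\cdot)}\le 1/2$) that $\|f|_B\|_{q_-(B)}\le 1$. Then H\"older with exponent $q_-(B)$ gives
\[
m_B^{q(x)}\ \le\ \mathbb{P}(B)^{-q(x)/q_-(B)}\,\|f|_B\|_{q_-(B)}^{\,q(x)}\ \le\ \mathbb{P}(B)^{-q(x)/q_-(B)}\,\|f|_B\|_{q_-(B)}^{\,q_-(B)},
\]
the second inequality holding precisely because $\|f|_B\|_{q_-(B)}\le 1$ and $q(x)\ge q_-(B)$; this is where the norm hypothesis enters. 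The resulting power of $\mathbb{P}(B)$ is $\mathbb{P}(B)^{(q_-(B)-q(x))/q_-(B)}\le \mathbb{P}(B)^{(p_-(B)-p_+(B))/p_-(B)}\le K$ by \eqref{65}, and the final pointwise comparison $|f|^{q_-(B)}\le |f|^{q(y)}+1$ now has the correct direction (lower exponent on the left). No case split on $m_B$ is needed.
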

\begin{proof}Let $q(x)=p(x)/{p_-}$, then for any $x\in B$,
$$q(x)\leq p(x),\quad\mbox{and}\quad 1\leq q_-(B)\leq p(x).$$
Let $f|_B(x)=f(x),\,x\in B.$ Then $\|f|_B\|_{p(\cdot)}\leq 1/2.$ Let $g=f/{\|f|_B\|_{p(\cdot)}}$. It follows from Lemma \ref{lemma22} that
\begin{eqnarray*}
\int_B \Big|\frac{f(x)}{\|f|_B\|_{p(\cdot)}} \Big|^{q_-(B)} d\mathbb{P}
&=&
\int_{B\cap \{|g|\geq1\}} |g(x)|^{q_-(B)} d\mathbb{P}+\int_{B\cap \{|g|<1\}} |g(x)|^{q_-(B)} d\mathbb{P}
\\&\leq& 1+ \mathbb{P}(\Omega).
\end{eqnarray*}
Then
\begin{eqnarray*}
\left\|f|_B\right\|_{q_-(B)}
&\leq&(1+\mathbb{P}(\Omega))^{\frac{1}{q_-(B)}}\|f|_B\|_{p(\cdot)}
\leq(1+ \mathbb{P}(\Omega))\|f\|_{p(\cdot)}\leq1.
\end{eqnarray*}
Using H\"{o}lder's inequality and \eqref{65}, we find that
\begin{eqnarray*}
\left(\frac{1}{\mathbb{P}(B)} \int_B |f(y)| d\mathbb{P}\right)^{q(x)}
&\leq&
\left(\frac{1}{\mathbb{P}(B)} \int_B |f(y)|^{q_-(B)} d\mathbb{P}\right)^{\frac{q(x)}{{q_-(B)} }}
\\&=&
\mathbb{P}(B)^{-\frac{q(x)}{{q_-(B)} }}\left\|f|_B\right\|_{q_-(B)}^{q(x)}
\\&\leq&\mathbb{P}(B)^{-\frac{q(x)}{{q_-(B)} }}\left\|f|_B\right\|_{q_-(B)}^{q_-(B)}
%\\&=&\mathbb{P}(B)^{-\frac{q(x)}{{q_-(B)}}+1}\frac{1}{\mathbb{P}(B)} \int_B |f(x)|^{q_-(B)}d\mathbb{P}
\\&=&\mathbb{P}(B)^{-\frac{q(x)-q_-(B)}{{q_-(B)}}}\frac{1}{\mathbb{P}(B)} \int_B |f(x)|^{q_-(B)}d\mathbb{P}
\\&\leq&\mathbb{P}(B)^{\frac{q_-(B)-q_+(B)}{{q_-(B)}}}\frac{1}{\mathbb{P}(B)} \int_B |f(y)|^{q_-(B)}d\mathbb{P}
\\&=&\mathbb{P}(B)^{\frac{p_-(B)-p_+(B)}{{p_-(B)}}}\frac{1}{\mathbb{P}(B)} \int_B |f(y)|^{q_-(B)}d\mathbb{P}
\\&\leq&K^{\frac{1}{p_-(B)} }\left( \frac{1}{\mathbb{P}(B)} \int_B\Big( |f(y)|^{q(y)}+1\Big) d\mathbb{P}\right)
\\&\leq& K\left( \frac{1}{\mathbb{P}(B)} \int_B\Big( |f(y)|^{q(y)}+1\Big) d\mathbb{P}\right).
\end{eqnarray*}
\end{proof}

\begin{theorem} \label{maximal boundedness}Let
$\mathcal{D}_n=\{A_j^n\}_{j\geq1}$, for each $n\geq0$,
be decompositions of $\Omega$ such that $(\mathcal{B}_n)_{n\geq0}=\left(\sigma(\mathcal{D}_n)\right)_{n\geq0}$ is increasing and  $\mathcal
 {F}=\mathcal {\sigma }\left(\bigcup_{n\geq 0}\mathcal {B}_n \right)$. Let $p(\cdot)$ satisfy \eqref{65} and $1< p_-\leq p_+ < \infty$.
Then for any martingale $f\in L^{p(\cdot)}$ with respect to $(\mathcal{B}_n)_{n\geq0},$
$$\|\sup_n|f_n|\|_{p(\cdot)} \leq C_{p(\cdot)} \|f\|_{p(\cdot)}.$$ \end{theorem}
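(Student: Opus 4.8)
The plan is to reduce this variable-exponent maximal inequality to the classical $L^{p_-}$ Doob inequality, with \eqref{32} serving as the bridge and Lemma \ref{lemma34} as the key pointwise estimate; the assumption \eqref{65} enters only through Lemma \ref{lemma34}. By the homogeneity of $\|\cdot\|_{p(\cdot)}$ it suffices to show that $\|f\|_{p(\cdot)}\le 1/2$ implies $\|\sup_n|f_n|\|_{p(\cdot)}\le C_{p(\cdot)}$, the general case following from the substitution $f\mapsto f/(2\|f\|_{p(\cdot)})$. Since $p_->1$ one has $L^{p(\cdot)}\hookrightarrow L^{p_-}$ on the probability space, so the bounded $L^{p(\cdot)}$-martingale $f$ is $L^{p_-}$-bounded, hence converges a.e.\ and in $L^1$ to some $f_\infty$ with $f_n=\E_{\mathcal{B}_n}f_\infty$, and Fatou for the modular gives $\|f_\infty\|_{p(\cdot)}\le\|f\|_{p(\cdot)}\le 1/2$; alternatively one may run the whole argument with the truncated maximal function $\sup_{n\le m}|f_n|$ and the closed finite martingale $(f_n)_{n\le m}$, letting $m\to\infty$ by monotone convergence, which avoids the convergence discussion altogether.

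Starting from the right-hand side of \eqref{32}, I would apply Lemma \ref{lemma34} with $B=A_j^n$ to each summand $\big(\frac{1}{\mathbb{P}(A_j^n)}\int_{A_j^n}|f_\infty|\,d\mathbb{P}\big)^{p(x)/p_-}\chi_{A_j^n}$ — this is precisely where $\|f_\infty\|_{p(\cdot)}\le 1/2$ and \eqref{65} are used. Summing over $j$ (the $A_j^n$ partition $\Omega$) and recognizing the resulting right-hand sum as $\E_{\mathcal{B}_n}\big(|f_\infty|^{p(\cdot)/p_-}\big)$, one obtains
$$\sup_n\sum_{j}\Big(\frac{1}{\mathbb{P}(A_j^n)}\int_{A_j^n}|f_\infty|\,d\mathbb{P}\Big)^{\frac{p(x)}{p_-}}\chi_{A_j^n}\le K\big(Mg+1\big),$$
where $g:=|f_\infty|^{p(\cdot)/p_-}\ge 0$ and $Mg=\sup_n\E_{\mathcal{B}_n}g$ is the ordinary Doob maximal function of the nonnegative $L^1$-martingale $(\E_{\mathcal{B}_n}g)_{n\ge0}$. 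Substituting into \eqref{32} and using $(a+b)^{p_-}\le 2^{p_-}(a^{p_-}+b^{p_-})$ yields $\int_\Omega(\sup_n|f_n|)^{p(x)}\,d\mathbb{P}\lesssim \int_\Omega(Mg)^{p_-}\,d\mathbb{P}+1$.

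The classical $L^{p_-}$ Doob inequality (legitimate since $p_->1$) then gives $\int_\Omega(Mg)^{p_-}\,d\mathbb{P}\le\big(\frac{p_-}{p_--1}\big)^{p_-}\int_\Omega g^{p_-}\,d\mathbb{P}=\big(\frac{p_-}{p_--1}\big)^{p_-}\rho(f_\infty)\le\big(\frac{p_-}{p_--1}\big)^{p_-}$, because $\|f_\infty\|_{p(\cdot)}\le 1/2$ forces $\rho(f_\infty)\le 1$ by Lemma \ref{lemma23}. Hence $\rho(\sup_n|f_n|)\le C_0$ for a constant $C_0=C_0(K,p_-)$. Finally I would convert this modular bound into a norm bound: since $\lambda^{-p(x)}\le\lambda^{-p_-}$ for $\lambda\ge 1$, taking $\lambda=\max(C_0^{1/p_-},1)$ gives $\rho\big(\sup_n|f_n|/\lambda\big)\le\lambda^{-p_-}C_0\le 1$, so $\|\sup_n|f_n|\|_{p(\cdot)}\le\lambda=:C_{p(\cdot)}$; undoing the normalization completes the proof.

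The main obstacle — and the reason \eqref{65} is needed — is the step hidden inside Lemma \ref{lemma34}: in the constant-exponent proof one passes from a power of an average of $|f|$ to the average of the corresponding power of $|f|$ by a one-line application of Jensen's inequality, but with a variable exponent the exponent $p(x)/p_-$ attached to a fixed point $x\in A_j^n$ differs from the exponents $p(y)/p_-$ occurring under the integral over $A_j^n$, and the discrepancy is exactly the factor $\mathbb{P}(A_j^n)^{(p_-(A_j^n)-p_+(A_j^n))/p_-(A_j^n)}$, which \eqref{65} bounds by $K^{1/p_-}\le K$. Once Lemma \ref{lemma34} is granted, the remainder is bookkeeping: the reduction via \eqref{32}, the identification of the conditional expectation, the invocation of classical Doob, and the modular-to-norm conversion via Lemma \ref{lemma23}.
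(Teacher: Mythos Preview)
Your proposal is correct and follows essentially the same route as the paper's proof: normalize to $\|f\|_{p(\cdot)}\le 1/2$, apply Lemma~\ref{lemma34} inside \eqref{32} to replace the variable-exponent power of the average by the conditional expectation of $|f|^{p(\cdot)/p_-}+1$, and then invoke the classical $L^{p_-}$ Doob inequality. Your version is slightly more explicit about the passage to $f_\infty$ and the modular-to-norm conversion, and you split $Mg+1$ before applying Doob whereas the paper applies Doob directly to $\E_{\mathcal{B}_n}\big(|f|^{q(\cdot)}+1\big)$, but these are cosmetic differences.
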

\begin{proof} We assume that $\left\|f\right\|_{p(\cdot)}\leq 1/2$ by homogeneity and let $q(x)=p(x)/{p_-}$. Then by Lemma \ref{lemma34} and the classical Doob maximal inequality
\begin{align*}
 & \mbox{\quad\quad}\int_{\Omega}  \left\{ \sup_{n} \sum_{j=1}^\infty \left( \frac{1}{\mathbb{P}\left( A_j^n\right)} \int_{A_j^n}|f(x)| d\mathbb{P} \right)^{\frac{p(x)}{p_-}} \chi_{A_j^n}  \right\}^{p_-} d\mathbb{P} \\
 &\leq \int_{\Omega}  \left\{\sup_{n} \sum_{j=1}^\infty K\left( \frac{1}{\mathbb{P}\left( A_j^n\right)} \int_{A_j^n}\left(|f(x)|^{\frac{p(x)}{p_-}} +1\right)d\mathbb{P} \right) \chi_{A_j^n}  \right\}^{p_-} d\mathbb{P} \\
 &=K^{p_-}\left\| \sup_n \E_{\mathcal{B}_n} \left( |f|^{q(\cdot)}+1\right) \right\|_{p_-}^{p_-} \\
 &\leq C_{p_-}K^{p_-} \left \||f|^{q(\cdot)}+1\right\|_{p_-}^{p_-}  \leq C
\end{align*}
By \eqref{32}, we have $\int_\Omega (Mf)^{p(x)}d\mathbb{P}\leq C$. Now the proof is complete. \end{proof}

\begin{remark}
(1) We point out that there is a non-log-H\"{o}lder continuous function $p(\cdot)$ for which the maximal operator is bounded on the corresponding Lebesgue spaces $L_{p(\cdot)}(\mathbb{R}^n)$; see \cite{Nekvinda}.

(2) Note that condition \eqref{65} could not cover the example given by Nakai and Sadasue (p.2169, \cite{NS}). Indeed, we can verify a special case of their example.
Let $((0,1],\Sigma, \mu)$ be a probability space such that $\mu$ is the Lebesgue measure and subalgebras $\{\Sigma_n\}_{n\geq0}$ generated as follows
$$\Sigma_n=\sigma\mbox{-algebra generated by atoms} \quad \big(\frac{j}{2^n},\frac{j+1}{2^n}\big],j=0,\cdots,2^n-1. $$
%by dyadic
%intervals with measure $2^{-n}$.
For $n\geq 0$ we set $B_n=\big(0,\frac{1}{2^n}\big]$, then
$$(0,1]=B_0\supset B_1 \supset \cdots \supset B_n \cdots,$$
 and let
 $$g(x)=\sin ( h(x)),\quad h(x)=\sum_{n=1}^\infty \frac{1}{\ln(2^ne)}(2\chi_{B_n}-\chi_{B_{n-1}}).$$
Denote $h_m:=\sum_{n=1}^{m} \frac{1}{\ln(2^ne)}-\frac{1}{\ln(2^{m+1}e)}$, $m\geq1$.
It is easy to check that
\begin{equation} \label{hm} h_{m}\rightarrow \infty \quad \mbox{as} \quad m\rightarrow \infty.\end{equation}
 Also, we have
\begin{equation}\label{dyadic example} 0<h_{m+1}-h_{m}\leq \frac{2}{(m+1)\ln2}< \frac{2\pi}{3},\quad m\geq 1.\end{equation}
% It suffices to show that $N_{x_1}\in (2k_1\pi+\frac{\pi}{6},2k_1\pi+\frac{5\pi}{6})$ for some integer $k_1$.
% Observe that for any $ x\in B_N$ with fixed $N$, there exists an integer $N_x \geq N$ such that
% $x\in B_{N_x}$ and $x\notin B_{N_x+1}$. for $ x\in B_N$
%$$g(x)=\sin ( h(x))=\sin(h_{N_x}),$$

 Given $N$, we shall show that there exists $y\in B_N $ such that $1\geq g(y)\geq 1/2$.
Choose the smallest integer $k$ so that $h_N < 2k\pi+\frac{\pi}{6}$. Then from \eqref{hm} and \eqref{dyadic example}, it follows that there exists $j> N$ satisfying $h_j\in  (2k\pi+\frac{\pi}{6},2k\pi+\frac{5\pi}{6})$.
%It is easy to see
%$h_{N_{y_n}}>h_{N_{y_{n-1}}} >\cdots >h_{N_{y_{1}}} >h_{N_x}$
%if $y_n\in B_{N_x+n}\setminus B_{N_x+n+1}$ for $n\geq1$.
%Since $\frac{1}{3n\ln 2}\leq \frac{1}{n\ln2+1} \leq \frac{1}{n\ln2}$, then $h_{N_{y_n}}\rightarrow \infty$ as $n\rightarrow \infty$.
%Without loss of generality, assume $h_{N_x} < 2k\pi+\frac{\pi}{6}$ with some fixed $k$, then there exists some integer $j$ so that $h_{N_{y_j}} \in (2k\pi+\frac{\pi}{6},2k\pi+\frac{5\pi}{6})$ since $\frac{2}{(N_{y_n}+1)\ln2}< \frac{2\pi}{3}$¡¡when $n\geq 1$. This means for any $y\in B_{N_x+j}\setminus B_{N_x+j+1}$, $ g(y)\geq 1/2$.
This means  for any  $y\in B_j\setminus B_{j+1}\subset B_N$, we have $ 1\geq g(y)\geq 1/2$.
 Similarly, there exists $z\in B_N$ such that $-1\leq g(z)\leq 0$. Now we obtain
$$\mu(B_N)^{g_-(B_N)-g_+(B_N)}=(2^N)^{g_+(B_N)-g_-(B_N)}\geq (2^N)^{g(y)-g(z)} \geq (2^N)^{1/2},$$
which implies that $g(\cdot)$ does not satisfy condition \eqref{65}.
\end{remark}

In the time of this writing, we do not know if the condition \eqref{65} is sufficient for the Doob maximal inequality in general probability spaces.

\begin{problem} \label{problem36}Let $p(\cdot)$ satisfy \eqref{65} with $1< p_-\leq p_+ < \infty$.
Then for any martingale $f\in L^{p(\cdot)}$ with respect to $(\mathcal{F}_n)_{n\geq0},$
$$\|\sup_n|f_n|\|_{p(\cdot)} \leq C_{p(\cdot)} \|f\|_{p(\cdot)} \,\,?$$ \end{problem}

\begin{remark} It is well known that $\big|\mathbb{E}_{\mathcal{F}_n}(f)\big|^p\leq \mathbb{E}_{\mathcal{F}_n}(|f|^p)$ for $1\leq p<\infty.$
However, it is easy to give inverse examples to show that one can never expect a variable exponent version, namely,
\begin{eqnarray}\label{33}
\big|\mathbb{E}_{\mathcal{F}_n}(f)\big|^{p(\cdot)}\leq C_{p(\cdot)}\mathbb{E}_{\mathcal{F}_n}(|f|^{p(\cdot)}),\quad 1\leq p(\cdot)<\infty.
\end{eqnarray}
Hence the main difficulty to deal with Problem \ref{problem36} is how to overcome or avoid the use of the inequality \eqref{33}. \end{remark}

\section{Atomic characterization of variable Hardy martingale space }

In this section we construct the atomic decoposition of martingale Hardy space with variable exponents.
Here we use Definitions \ref{definition11} and \ref{definition12}.

\begin{proposition} \label{proposition41}Given $p(\cdot) \in \mathcal{P}$. Let $f\in
H_{p(\cdot)}^{s,at}$, i.e., $f=\sum\mu_ka^k.$
\begin{enumerate}
\item We have
$$\left(\sum_{k\in\mathbb{Z}} \mu_k^{p_+} \right)^{\frac{1}{p_+}} \leq \mathcal{A}(\{\mu_k\},\{a^k\},\{\tau_k\}).$$

\item If $p_+ \leq 1$, then $$\sum_{k\in\mathbb{Z}}\mu_k \leq
\mathcal{A}(\{\mu_k\},\{a^k\},\{\tau_k\}).$$

\item For any $k\in \mathbb{Z}$ we have
$$\|a^k\|_{H_{p(\cdot)}^{s, at}}\leq 1.$$
\end{enumerate}
\end{proposition}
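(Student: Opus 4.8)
The plan is to deduce all three parts from the modular identity of Lemma \ref{lemma22} combined with the elementary inequality $\big(\sum_j c_j\big)^r\ge\sum_j c_j^r$, valid for nonnegative reals $c_j$ and exponents $r\ge 1$ (superadditivity of $t\mapsto t^r$). Throughout, write
$$g=\Bigg\{\sum_{k\in\mathbb{Z}}\Bigg(\frac{\mu_k\chi_{\{\tau_k<\infty\}}}{\|\chi_{\{\tau_k<\infty\}}\|_{p(\cdot)}}\Bigg)^{\underline{p}}\Bigg\}^{1/\underline{p}},$$
so that $\mathcal{A}:=\mathcal{A}(\{\mu_k\},\{a^k\},\{\tau_k\})=\|g\|_{p(\cdot)}$. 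We may assume $0<\mathcal{A}<\infty$ (the cases $\mathcal{A}=0$ and $\mathcal{A}=\infty$ being trivial) and, as is implicit in Definition \ref{definition12}, that $\mathbb{P}(\{\tau_k<\infty\})>0$ for every $k$, so that $0<\|\chi_{\{\tau_k<\infty\}}\|_{p(\cdot)}<\infty$.

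For (1) I would first record two facts. Since all summands are nonnegative, $g\ge\mu_k\chi_{\{\tau_k<\infty\}}/\|\chi_{\{\tau_k<\infty\}}\|_{p(\cdot)}$ pointwise, hence by monotonicity and homogeneity of the quasi-norm $\mathcal{A}=\|g\|_{p(\cdot)}\ge\mu_k$; in particular $\mu_k/\mathcal{A}\le 1$ for all $k$. Next, since $p(x)/\underline{p}\ge 1$ everywhere, raising the identity $(g/\mathcal{A})^{\underline{p}}=\sum_k\big(\mu_k\chi_{\{\tau_k<\infty\}}/(\mathcal{A}\|\chi_{\{\tau_k<\infty\}}\|_{p(\cdot)})\big)^{\underline{p}}$ to the power $p(x)/\underline{p}$ and invoking superadditivity gives the pointwise bound
$$\Bigg(\frac{g(x)}{\mathcal{A}}\Bigg)^{p(x)}\ \ge\ \sum_{k\in\mathbb{Z}}\Bigg(\frac{\mu_k\,\chi_{\{\tau_k<\infty\}}(x)}{\mathcal{A}\,\|\chi_{\{\tau_k<\infty\}}\|_{p(\cdot)}}\Bigg)^{p(x)}.$$
By Lemma \ref{lemma22} the left-hand side integrates to $1$ over $\Omega$; integrating termwise on the right (all terms nonnegative) yields $1\ge\sum_k\int_{\{\tau_k<\infty\}}\big(\mu_k/(\mathcal{A}\|\chi_{\{\tau_k<\infty\}}\|_{p(\cdot)})\big)^{p(x)}\,d\mathbb{P}$. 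Splitting each integrand as $(\mu_k/\mathcal{A})^{p(x)}\cdot\|\chi_{\{\tau_k<\infty\}}\|_{p(\cdot)}^{-p(x)}$, using $\mu_k/\mathcal{A}\le 1$ together with $p(x)\le p_+$ to get $(\mu_k/\mathcal{A})^{p(x)}\ge(\mu_k/\mathcal{A})^{p_+}$, and applying Lemma \ref{lemma22} to $\chi_{\{\tau_k<\infty\}}$ (whose $p(\cdot)$-modular equals $1$), we arrive at $1\ge\sum_k(\mu_k/\mathcal{A})^{p_+}$, which is (1) after rearranging.

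Part (2) follows from the same chain with one change: when $p_+\le 1$ one has $\underline{p}=p_-$ and $p(x)\le 1$ everywhere, so in the final step one estimates $(\mu_k/\mathcal{A})^{p(x)}\ge\mu_k/\mathcal{A}$ instead, and Lemma \ref{lemma22} again gives $1\ge\sum_k\mu_k/\mathcal{A}$, i.e. $\sum_k\mu_k\le\mathcal{A}$. Part (3) is immediate: for fixed $k$ the one-term representation $a^k=1\cdot a^k$ (weight $1$, stopping time $\tau_k$) is an admissible decomposition in $A(s,p(\cdot),\infty)$, and $\mathcal{A}(\{1\},\{a^k\},\{\tau_k\})=\big\|\chi_{\{\tau_k<\infty\}}/\|\chi_{\{\tau_k<\infty\}}\|_{p(\cdot)}\big\|_{p(\cdot)}=1$ by homogeneity, so the infimum defining $\|a^k\|_{H_{p(\cdot)}^{s,at}}$ is at most $1$.

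The point that deserves care is the replacement of the classical rearrangement argument: since $L^{p(\cdot)}$ is not rearrangement invariant there is no distribution-function identity to exploit, so every estimate must be performed at the level of the modular via Lemma \ref{lemma22}, and the one-sided bounds $\mu_k\le\mathcal{A}$, $p_-\le p(x)\le p_+$ and $p(x)/\underline{p}\ge 1$ are exactly what make each inequality point in the right direction. Beyond this I expect only routine verification.
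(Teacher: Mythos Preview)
Your proof is correct and follows essentially the same route as the paper: both use the superadditivity of $t\mapsto t^{p(x)/\underline{p}}$ (the paper calls it ``convexity'') to pass from the $\underline{p}$-sum to the $p(x)$-sum, then invoke Lemma~\ref{lemma22} on each $\chi_{\{\tau_k<\infty\}}$ together with $(\mu_k/\lambda)^{p(x)}\ge(\mu_k/\lambda)^{p_+}$. The only cosmetic difference is the choice of normalization: the paper sets $\lambda=\big(\sum_k\mu_k^{p_+}\big)^{1/p_+}$ (so $\mu_k/\lambda\le 1$ is automatic) and shows $\rho(g/\lambda)\ge 1$, whereas you set $\lambda=\mathcal{A}$ (so $\rho(g/\lambda)=1$ by Lemma~\ref{lemma22}) and first establish $\mu_k\le\mathcal{A}$ separately; these are the same argument run from opposite ends.
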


\begin{proof}  (1) The convexity implies
that
\begin{eqnarray*}
\int_\Omega \left( \sum_{k\in\mathbb{Z}} \left( \frac{\mu_k
\chi_{\{\tau_k<\infty\}}}{\lambda
\|\chi_{\{\tau_k<\infty\}}\|_{p(\cdot)}} \right)^{\underline{p}}
\right)^{\frac{p(x)}{\underline{p}}} d\mathbb{P} &\geq& \int_\Omega
\sum_{k\in\mathbb{Z}} \left( \frac{\mu_k
\chi_{\{\tau_k<\infty\}}}{\lambda
\|\chi_{\{\tau_k<\infty\}}\|_{p(\cdot)}} \right)^{p(x)}
d\mathbb{P}
\\&=&
\sum_{k\in\mathbb{Z}} \int_{\{\tau_k<\infty\}}  \left( \frac{\mu_k
}{\lambda \|\chi_{\{\tau_k<\infty\}}\|_{p(\cdot)}} \right)^{p(x)}
d\mathbb{P}
\end{eqnarray*}
Now if we set $\lambda=\left(\sum_{k\in\mathbb{Z}} \mu_k^{p_+}
\right)^{\frac{1}{p_+}}$, and then we obtain
$$\int_\Omega \Big( \sum_{k\in\mathbb{Z}} \Big( \frac{\mu_k \chi_{\{\tau_k<\infty\}}}{\lambda \|\chi_{\{\tau_k<\infty\}}\|_{p(\cdot)}} \Big)^{\underline{p}}  \Big)^{\frac{p(x)}{\underline{p}}} d\mathbb{P} \geq
\sum_{k \in \mathbb{Z}} \left(\frac{\mu_k}{\lambda}\right)^{p_+}
\int_\Omega \left( \frac{ \chi_{\{\tau_k < \infty\}}} {
\|\chi_{\{\tau_k < \infty\}}\|_{p(\cdot)}} \right)^{p(x)}
d\mathbb{P} =1.$$ By the definition of
$\mathcal{A}(\{\mu_k\},\{a^k\},\{\tau_k\})$, we get the desired result.

(2) and (3) are obvious.
\end{proof}

\begin{theorem} \label{theorem42} Let $p(\cdot)\in\mathcal{P}$. If the martingale $f\in H_{p(\cdot)}^s$, then
there exist a sequence $(a^k)_{k\in
\mathbb{Z}}$ of $(1,p(\cdot),\infty)$-atoms and a sequence
$(\mu_k)_{k\in \mathbb{Z}}$ of nonnegative real numbers such that for all $n\geq0$,
\begin{equation}\label{41}
\sum_{k\in \mathbb{Z}} \mu_k \E_{\mathcal{F}_n}a^k=f_n, \quad\mbox{a.e}
\end{equation}
and
$$\mathcal{A}(\{\mu_k\},\{a^k\},\{\tau_k\}) \lesssim {H_{p(\cdot)}^s}.$$
Moreover the sum $\sum_{k\in \mathbb{Z}} \mu_k a^k$ converges to $f$ in ${H_{p(\cdot)}^s}$.
Conversely, if the martingale $f$ has a decomposition of \eqref{41}, then
$$\|f\|_{H_{p(\cdot)}^s} \lesssim \inf \mathcal{A}(\{\mu_k\},\{a^k\},\{\tau_k\}),$$ where the infimum is taken over all the decompositions of the form \eqref{41}. \end{theorem}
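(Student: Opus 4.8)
The plan is to follow the classical stopping-time construction of Weisz, adapted to the variable-exponent setting, relying on the $\underline{p}$-triangle inequality (property (3) of $\|\cdot\|_{p(\cdot)}$) and the definition of $\mathcal{A}$ rather than on any $L^p$-scaling identity. First I would fix $f=(f_n)\in H^s_{p(\cdot)}$ and, for each $k\in\mathbb{Z}$, define the stopping time
$$\tau_k=\inf\{n\geq0: s_{n+1}(f)>2^k\},$$
so that $(\tau_k)_{k\in\mathbb{Z}}$ is nondecreasing and $s_n(f^{\tau_k})\leq 2^k$ for all $n$. Setting $\mu_k=3\cdot 2^k\,\|\chi_{\{\tau_k<\infty\}}\|_{p(\cdot)}$ and
$$a^k=\frac{1}{\mu_k}\bigl(f^{\tau_{k+1}}-f^{\tau_k}\bigr),$$
one checks the two atom conditions from Definition \ref{definition11}: condition (1) holds because $\E_{\mathcal{F}_n}(f^{\tau_{k+1}}-f^{\tau_k})=0$ on $\{n\leq\tau_k\}$ (the increments $df_m^{\tau_{k+1}}-df_m^{\tau_k}$ vanish for $m\leq\tau_k$ and are martingale differences otherwise), and condition (2) holds because $s(a^k)\leq\frac{1}{\mu_k}\bigl(s(f^{\tau_{k+1}})+s(f^{\tau_k})\bigr)\leq\frac{3\cdot 2^k}{\mu_k}=\|\chi_{\{\tau_k<\infty\}}\|_{p(\cdot)}^{-1}$, using the elementary pointwise bound $s(g-h)\le s(g)+s(h)$ and $s(f^{\tau_k})\le 2^k$. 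The telescoping identity $\sum_k\mu_k\E_{\mathcal{F}_n}a^k=\lim_{N\to\infty}(f_n^{\tau_{N+1}}-f_n^{\tau_{-N}})=f_n$ gives \eqref{41}; the convergence is justified since $\tau_k\to\infty$ a.e.\ as $k\to\infty$ (because $s(f)<\infty$ a.e.) and $\tau_k=0$ eventually as $k\to-\infty$ on the set where $s(f)>0$, while on $\{s(f)=0\}$ we have $f\equiv 0$.

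The heart of the argument is the norm estimate $\mathcal{A}(\{\mu_k\},\{a^k\},\{\tau_k\})\lesssim\|f\|_{H^s_{p(\cdot)}}$. Here I would observe the crucial pointwise inequality: on the set $\{\tau_k<\infty\}$ one has $s(f)>2^k$ (by definition of $\tau_k$), hence for every $x$,
$$\sum_{k\in\mathbb{Z}}\left(\frac{\mu_k\chi_{\{\tau_k<\infty\}}(x)}{\|\chi_{\{\tau_k<\infty\}}\|_{p(\cdot)}}\right)^{\underline{p}}=\sum_{k\in\mathbb{Z}}\bigl(3\cdot 2^k\chi_{\{\tau_k<\infty\}}(x)\bigr)^{\underline{p}}\leq 3^{\underline{p}}\sum_{\{k:\,2^k<s(f)(x)\}}2^{k\underline{p}}\leq C\, s(f)(x)^{\underline{p}},$$
the last step being the geometric series $\sum_{2^k<t}2^{k\underline{p}}\le C t^{\underline{p}}$. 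Raising to the power $1/\underline{p}$ and taking $\|\cdot\|_{p(\cdot)}$ yields $\mathcal{A}(\{\mu_k\},\{a^k\},\{\tau_k\})\leq C\|s(f)\|_{p(\cdot)}=C\|f\|_{H^s_{p(\cdot)}}$. The convergence of $\sum_k\mu_k a^k$ to $f$ in $H^s_{p(\cdot)}$ then follows by applying the same estimate to the tails: $s\bigl(f-\sum_{|k|\le N}\mu_k a^k\bigr)=s\bigl(f-f^{\tau_{N+1}}+f^{\tau_{-N}}\bigr)$ is controlled by $s(f)\chi_{\{\tau_{N+1}<\infty\}}+s(f)\chi_{\{\tau_{-N}<\infty\}}$ up to constants (more precisely by $s(f)\chi_{\{s(f)>2^N\}}+\cdots$), which tends to $0$ in $L^{p(\cdot)}$ by dominated convergence for the modular together with Lemma \ref{lemma23}.

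For the converse, suppose $f=\sum_k\mu_k a^k$ with each $a^k$ a $(1,p(\cdot),\infty)$-atom associated to $\tau_k$. Since $s$ is subadditive, $s(f)\leq\sum_k\mu_k s(a^k)$ pointwise, but this crude bound loses too much; instead I would split $s(a^k)=s(a^k)\chi_{\{\tau_k<\infty\}}$ — valid because $a^k$ has vanishing increments up to $\tau_k$, so $da^k_m=0$ on $\{m\le\tau_k\}$ and $s(a^k)$ is supported on $\{\tau_k<\infty\}$ — and then use $\|s(a^k)\|_\infty\le\|\chi_{\{\tau_k<\infty\}}\|_{p(\cdot)}^{-1}$ to get $\mu_k s(a^k)\le\frac{\mu_k\chi_{\{\tau_k<\infty\}}}{\|\chi_{\{\tau_k<\infty\}}\|_{p(\cdot)}}$ pointwise. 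Summing and using $\bigl(\sum_k b_k\bigr)^{\underline p}\le\sum_k b_k^{\underline p}$ for $b_k\ge0$ (since $\underline p\le 1$) gives $s(f)^{\underline p}\le\sum_k\bigl(\tfrac{\mu_k\chi_{\{\tau_k<\infty\}}}{\|\chi_{\{\tau_k<\infty\}}\|_{p(\cdot)}}\bigr)^{\underline p}$, so taking $\|\cdot\|_{p(\cdot)}$ after raising to $1/\underline p$ yields $\|f\|_{H^s_{p(\cdot)}}\le\mathcal{A}(\{\mu_k\},\{a^k\},\{\tau_k\})$, and the infimum over decompositions finishes the proof. The main obstacle I anticipate is the forward-direction convergence claim in $H^s_{p(\cdot)}$: because $L^{p(\cdot)}$ is not rearrangement invariant and there is no $L^p$-type integral formula, I must argue convergence through the modular $\rho$ and Lemma \ref{lemma23}, being careful that the dominating function $s(f)\in L^{p(\cdot)}$ makes the modular of the tails vanish — which is exactly where the structural lemmas of Section 2 (rather than classical real-variable tools) are doing the work.
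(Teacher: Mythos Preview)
Your proposal is correct and follows essentially the same approach as the paper: the same stopping times $\tau_k=\inf\{n:s_{n+1}(f)>2^k\}$, the same coefficients $\mu_k=3\cdot 2^k\|\chi_{\{\tau_k<\infty\}}\|_{p(\cdot)}$, the same geometric-series pointwise bound $\sum_k(3\cdot 2^k\chi_{\{\tau_k<\infty\}})^{\underline p}\lesssim s(f)^{\underline p}$, dominated convergence in $L^{p(\cdot)}$ for the tail, and the same support-plus-$L^\infty$ argument for the converse. The only cosmetic difference is that the paper routes the norm estimate through the disjoint sets $\mathcal{O}_k\setminus\mathcal{O}_{k+1}$ before comparing to $s(f)$, whereas you sum the geometric series directly against $s(f)(x)$; the two computations are equivalent.
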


\begin{proof}
Assume that $f \in H_{p(\cdot)}^s$. Let us consider the following
stopping times for all $k \in \mathbb{Z}$
$$\tau_k=\inf\{n\in\mathbb{N}: s_{n+1}(f)>2^k \}.$$
The sequence of these stopping times is obviously non-decreasing.
For each stopping time $\tau$, denote $f_n^\tau= f_{n\wedge\tau}$.
It is easy to see that
$$f_n=\sum_{k\in\mathbb{Z}}(f_n^{\tau_{k+1}}-f_n^{\tau_{k}}).$$
Let
$$\mu_k=3\cdot2^k \left\|\chi_{\{ \tau_k<\infty \}} \right\|_{p(\cdot)},\quad\mbox{and}\quad a_n^k= \frac{ f_n^{\tau_{k+1}}-f_n^{\tau_{k}} }{\mu_k}.$$
If $\mu_k=0$ then let $a_n^k=0$ for all
$k\in\mathbb{Z},n\in\mathbb{N}$. Then $(a_n^k)_{n \geq 0}$ is a martingale for each fixed $k\in
\mathbb{Z}$. Since $s(f^{\tau_k}) = s_{\tau_k}(f) \leq 2^k$, we get
$$ s\left((a_n^k)_{n \geq 0} \right) \leq \frac{s(f^{\tau_{k+1}})+s(f^{\tau_k})}{\mu_k} \leq
\left\| \chi_{\{ \tau_k<\infty \}} \right\|_{p(\cdot)}^{-1}.$$ Hence
it is easy to check that $(a_n^k)_{n \geq 0}$ is a bounded $L_2$-martingale. Consequently, there exists an element $a^k \in L_2$ such
that $\E_{\mathcal{F}_n}a^k= a_n^k$. If $n \leq \tau_k$, then $a_n^k=0$, and $s(a^k) \leq  \left\|
\chi_{\{ \tau_k<\infty \}} \right\|_{p(\cdot)}^{-1}$. Thus we
conclude that $a^k$ is really a $(1,p(\cdot),\infty)$-atom.

Denote $\mathcal{O}_k = \{\tau_k < \infty\} = \{ s(f)>2^k \}$.
Recalling that $\tau_k$ is non-decreasing for each $k \in
\mathbb{Z}$, we have $\mathcal{O}_k \supset \mathcal{O}_{k+1}$. Then
$$\sum_{k \in \mathbb{Z}} \left({3\cdot2^k \chi_{\mathcal{O}_k}(x)}\right)^{\underline{p}}$$
is the sum of the geometric sequence $\left\{\left({3\cdot2^k
\chi_{\mathcal{O}_k}(x)}\right)^{\underline{p}}\right\}_{{k \in
\mathbb{Z}}}$. Thus, we can claim that
$$\sum_{k \in \mathbb{Z}} \left({3\cdot2^k \chi_{\mathcal{O}_k}(x)} \right)^{\underline{p}} \thickapprox
\left(\sum_{k \in \mathbb{Z}} {3\cdot2^k
\chi_{\mathcal{O}_k}(x)}\right)^{\underline{p}}   \thickapprox
\left(\sum_{k \in \mathbb{Z}} {3\cdot2^k \chi_{\mathcal{O}_k
\backslash \mathcal{O}_{k+1}}(x)}\right)^{\underline{p}}.
$$Indeed, for each fixed $x_0 \in \Omega$, there is $k_0 \in
\mathbb{Z}$ such that $x_0 \in \mathcal{O}_{k_0}$ but $\not\in
\mathcal{O}_{k_0+1}$, then
\begin{eqnarray*}
\sum_{k=-\infty}^{k_0} \left({3\cdot2^k
\chi_{\mathcal{O}_k}(x_0)}\right)^{\underline{p}} &=&
\sum_{k=-\infty}^{k_0}\left(3\cdot2^k
\right)^{\underline{p}}
=\left( 3\cdot2^{k_0} \right)^{\underline{p}} \frac{1}{1-2^{-{\underline{p}} }}
\\&\lesssim& \left( 3\cdot2^{k_0} \right)^{\underline{p}} \left(\frac{1}{1-\frac{1}{2}} \right)^{\underline{p}}
\\&=& \left(\sum_{k=-\infty}^{k_0} {3\cdot2^k
\chi_{\mathcal{O}_k}(x_0)}\right)^{\underline{p}}
\\&\lesssim& \left(\sum_{k=-\infty}^{k_0} {3\cdot2^k
\chi_{\mathcal{O}_k\backslash
\mathcal{O}_{k+1}}(x_0)}\right)^{\underline{p}}.
\end{eqnarray*}
Thus
\begin{eqnarray*}
\mathcal{A}(\{ \mu_k\},\{a^k\},\{ \tau_k\})&=&\Bigg\|\Bigg\{\sum_{k\in\mathbb{Z}}\Bigg(\frac{\mu_k\chi_{\{\tau_k<\infty\}}}
{\|\chi_{\{\tau_k<\infty\}}\|_{p(\cdot)}}\Bigg)^{\underline{p}}\Bigg\}^{\frac{1}{\underline{p}}}\Bigg\|_{p(\cdot)}
\\&=&\Bigg\|\Bigg\{\sum_{k\in\mathbb{Z}}\Bigg(3\cdot2^k\chi_{\{\tau_k<\infty\}}
\Bigg)^{\underline{p}}\Bigg\}^{\frac{1}{\underline{p}}}\Bigg\|_{p(\cdot)}
\\&\lesssim&\left\|\sum_{k \in
\mathbb{Z}} {3\cdot2^k \chi_{\mathcal{O}_k \backslash
\mathcal{O}_{k+1}}}\right\|_{p(\cdot)}
\\&=& \inf \Big\{ \lambda >0: \int_\Omega \Big(\sum_{k \in \mathbb{Z}} \frac{3\cdot2^k \chi_{\mathcal{O}_k \backslash \mathcal{O}_{k+1}}(x)}{\lambda} \Big)^{p(x)} d\mathbb{P} \leq 1 \Big\}
\\&=& \inf \Big\{ \lambda >0: \sum_{k \in \mathbb{Z}}
\int_{\mathcal{O}_k \backslash \mathcal{O}_{k+1}} \left(
\frac{3\cdot2^k }{\lambda} \right)^{p(x)} d\mathbb{P} \leq 1 \Big\}
\\&\approx& \inf \left\{ \lambda >0:\int_\Omega \left( \frac{s(f)}{\lambda} \right)^{p(x)} d\mathbb{P} \leq 1 \right\}.
\end{eqnarray*}
Therefore, we obtain
$$\mathcal{A}(\{ \mu_k\},\{a^k\},\{ \tau_k\}) \lesssim \left\|s(f)\right\|_{p(\cdot)} = \left\|f \right\|_{H_{p(\cdot)}^s}.$$

\noindent We now verify the sum $\sum_{k\in \mathbb{Z}} \mu_k a^k$ converges in ${H_{p(\cdot)}^s}$. By the equality
$s(f-f^{\tau_k})^2=s(f)^2-s(f^{\tau_k})^2$ we have
$$s(f-f^{\tau_k}),~s(f^{\tau_k}) \leq s(f) \quad\mbox{and}\quad s(f-f^{\tau_k}),~s(f^{\tau_{-k}})\rightarrow 0\quad\mbox{a.e., as $k\rightarrow\infty$} .$$
Consequently, by the dominated convergence theorem in variable $L^{p(\cdot)}$ (Theorem 2.62 in \cite{CF})
$$\left\|f-\sum_{k=-M}^N\mu_ka^k \right\|_{H_{p(\cdot)}^s}^{\underline{p}} \leq
\left\|f-f^{\tau_{N+1}}\right\|_{H_{p(\cdot)}^s}^{\underline{p}} + \left\|f^{\tau_{-M}} \right\|_{H_{p(\cdot)}^s}^{\underline{p}} $$
converges to $0$ a.e. as $M,N\rightarrow \infty$.

Conversely, by the definition of $(1,p(\cdot),\infty)$-atom, we have almost everywhere
$$s(a)=s(a)\chi_{\{\tau<\infty\}} \leq \left\|s(a)\right\|_\infty\chi_{\{\tau<\infty\}}
\leq
\left\|\chi_{\{\tau<\infty\}}
\right\|_{p(\cdot)}^{-1}\chi_{\{\tau<\infty\}},$$
where $a$ is a $(1,p(\cdot),\infty)$-atom. By the subadditivity of the
conditional quadratic variation operator, we obtain that
$$s(f)\leq \sum_{k\in\mathbb{Z}}\mu_k s(a^k) \leq
\sum_{k\in\mathbb{Z}}\mu_k\frac{\chi_{\{\tau_k<\infty\}}}{
\left\|\chi_{\{\tau_k<\infty\}} \right\|_{p(\cdot)}}.$$ Thus
\begin{eqnarray*}
\|f\|_{H_{p(\cdot)}^s}=\|s(f)\|_{p(\cdot)} &\leq&
\left\|\sum_{k\in\mathbb{Z}}\mu_k\frac{\chi_{\{\tau_k<\infty\}}}{\|\chi_{\{\tau_k<\infty\}}\|_{p(\cdot)}}
\right\|_{p(\cdot)}
\\&\leq& \left\|\left\{\sum_{k\in\mathbb{Z}}\left(\mu_k\frac{\chi_{\{\tau_k<\infty\}}}{\|\chi_{\{\tau_k<\infty\}}\|_{p(\cdot)}}\right)^{\underline{p}}\right\}
^{\frac{1}{\underline{p}}}\right\|_{p(\cdot)}
\\&=&\mathcal{A}(\{\mu_k\},\{a^k\},\{\tau_k\}).
\end{eqnarray*}
Hence we can conclude that
$\|f\|_{H_{p(\cdot)}^s} \thickapprox \|f\|_{H_{p(\cdot)}^{s, at}}$ and
the proof is complete now.
\end{proof}

\begin{remark}
It is showed in Theorem 5.1 in \cite{HO2} that, for the atomic decomposition of Hardy-Morrey spaces with variable exponents $p(\cdot)$ on $\mathbb{R}^n$, the exponent function $p(\cdot)$ is
not necessary to be log-H\"{o}lder continuous.
\end{remark}

\section{The duality and John-Nirenberg theorem}

In this section we establish the dual space of $H_{p(\cdot)}^s$ by the atomic decomposition established in Section 4 and prove the John-Nirenberg inequalities in the setting of variable exponents.

\begin{proposition} \label{proposition51} Let $p(\cdot)\in \mathcal{P}$ satisfy \eqref{65} with $0<p_-\leq p_+<\infty$.
\begin{enumerate}
\item If $q(\cdot)\in \mathcal{P}$ satisfies \eqref{65}, then $p(\cdot)+ q(\cdot)$ also satisfies \eqref{65};\\
\item $\frac{1}{p(\cdot)}$ satisfy \eqref{65};\\
\item If $\frac{1}{p(x)}+\frac{1}{q(x)}=1$, then $q(\cdot)$ satisfies \eqref{65};\\
\item If $q(\cdot)\in \mathcal{P}$ satisfies \eqref{65} and $\frac{1}{p(x)}+\frac{1}{q(x)}=\frac{1}{r(x)}$, then $r(\cdot)$ satisfies \eqref{65}.
\end{enumerate}
\end{proposition}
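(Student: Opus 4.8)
The plan is to isolate two elementary stability properties of condition \eqref{65} --- stability under addition and under passing to reciprocals --- and then to obtain (3) and (4) as purely formal combinations of these, together with the observation that \eqref{65} is insensitive to adding a constant. Throughout one should note that the inequality in \eqref{65} is automatic when $\mathbb{P}(A)\in\{0,1\}$ (recall $K_{p(\cdot)}\ge 1$), so it suffices to bound $\mathbb{P}(A)^{f_-(A)-f_+(A)}$ for sets $A$ with $0<\mathbb{P}(A)<1$, and for such $A$ the map $s\mapsto\mathbb{P}(A)^{s}$ is strictly decreasing. I would record three trivial facts about oscillations: for $f,g\in\mathcal{P}$ and any $A$,
$$(f+g)_-(A)-(f+g)_+(A)\ \ge\ \bigl(f_-(A)-f_+(A)\bigr)+\bigl(g_-(A)-g_+(A)\bigr);$$
if $0<m\le f\le M<\infty$ then $(1/f)_+(A)=1/f_-(A)$, $(1/f)_-(A)=1/f_+(A)$, whence
$$(1/f)_-(A)-(1/f)_+(A)=\frac{f_-(A)-f_+(A)}{f_-(A)f_+(A)}\ \ge\ \frac{f_-(A)-f_+(A)}{m^{2}}$$
(the numerator being $\le 0$ and the denominator $\ge m^{2}$); and for a real constant $d$ with $d-f\in\mathcal{P}$ one has $(d-f)_-(A)-(d-f)_+(A)=f_-(A)-f_+(A)$.

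For (1) I would combine the first fact with the monotonicity of $s\mapsto\mathbb{P}(A)^{s}$ to get, for $0<\mathbb{P}(A)<1$,
$$\mathbb{P}(A)^{(p+q)_-(A)-(p+q)_+(A)}\le\mathbb{P}(A)^{p_-(A)-p_+(A)}\,\mathbb{P}(A)^{q_-(A)-q_+(A)}\le K_{p(\cdot)}K_{q(\cdot)},$$
so $p(\cdot)+q(\cdot)$ satisfies \eqref{65}. For (2) I would apply the second fact with $m=p_->0$ and again use monotonicity of $s\mapsto\mathbb{P}(A)^{s}$:
$$\mathbb{P}(A)^{(1/p)_-(A)-(1/p)_+(A)}\le\Bigl(\mathbb{P}(A)^{p_-(A)-p_+(A)}\Bigr)^{1/p_-^{2}}\le K_{p(\cdot)}^{1/p_-^{2}},$$
so $1/p(\cdot)$ satisfies \eqref{65}.

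For (3) and (4) I would simply iterate (1), (2) and the third fact, the only point of care being to check that the exponent produced at each stage still takes values in a compact subinterval of $(0,\infty)$, so that (2) applies again. In (3), $1/q(\cdot)=1-1/p(\cdot)$: by (2) the function $1/p(\cdot)$ satisfies \eqref{65}, by the third fact so does $1-1/p(\cdot)=1/q(\cdot)$, and since the identity $\tfrac1p+\tfrac1q=1$ with $q(\cdot)\in\mathcal{P}$ forces $p(\cdot)>1$ pointwise (so that $0<(1/q)_-\le(1/q)_+<1$, using the standing assumption $p_+<\infty$), a second application of (2) to $1/q(\cdot)$ shows $q(\cdot)=1/(1/q(\cdot))$ satisfies \eqref{65}. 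In (4), $1/p(\cdot)$ and $1/q(\cdot)$ satisfy \eqref{65} by (2), hence $1/r(\cdot)=1/p(\cdot)+1/q(\cdot)$ does by (1); since $1/p_++1/q_+\le(1/r)_-\le(1/r)_+\le 1/p_-+1/q_-$ the exponent $1/r(\cdot)$ has range inside $(0,\infty)$, and (2) applied once more gives that $r(\cdot)$ satisfies \eqref{65}.

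I expect the only genuine obstacle (and it is a very mild one) to be this bookkeeping of ranges between successive uses of (2), together with noticing in (3) that $q(\cdot)\in\mathcal{P}$ already encodes the implicit restriction $1<p_-$ under which the conjugate exponent is defined; the analytic content reduces entirely to the monotonicity of $t\mapsto t^{s}$ in $s$ on $(0,1)$ and the two oscillation estimates above.
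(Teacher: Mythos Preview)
Your proof is correct and follows essentially the same route as the paper: both arguments reduce everything to the two stability facts --- addition and reciprocals --- and then obtain (3) and (4) by iteration. The paper's version is slightly terser (for instance, in (2) it splits into the cases $p_-\ge 1$ and $p_-<1$ to record a sharper constant in the former, and in (3) it computes the oscillation of $1-1/p(\cdot)$ directly rather than invoking your ``constant minus'' observation), while you are more explicit about verifying that each intermediate exponent stays in a compact subinterval of $(0,\infty)$ before reapplying (2); but there is no substantive difference in method.
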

\begin{proof} (1) Set $h(\cdot)=p(\cdot)+ q(\cdot)$, then $$h_-(A)-h_+(A)\geq p_-(A)+q_-(A)-p_+(A)-q_+(A).$$
Hence $$\mathbb{P}(A)^{h_-(A)-h_+(A)}\leq \mathbb{P}(A)^{p_-(A)-p_+(A)+q_-(A)-q_+(A)} \leq K_{p(\cdot)}K_{q(\cdot)}\triangleq K.$$

(2) We have
$$\mathbb{P}(A)^{1/p_+(A)-1/p_-(A)}=\mathbb{P}(A)^{\frac{p_-(A)-p_+(A)}{p_+(A)p_-(A)}}\leq K_{p(\cdot)}^{\frac{1}{p_+(A)p_-(A)}}.$$
If $p_-(\Omega)\geq 1$, then $K_{p(\cdot)}^{\frac{1}{p_+(A)p_-(A)}} \leq K_{p(\cdot)}.$
If $0<p_-(\Omega)<1$, then $$K_{p(\cdot)}^{\frac{1}{p_+(A)p_-(A)}} \leq K_{p(\cdot)}^{1/p_-^2(\Omega)}\triangleq K.$$

(3) Set $h(\cdot)=1-\frac{1}{p(\cdot)}$. We get
$$\mathbb{P}(A)^{h_-(A)-h_+(A)}=\mathbb{P}(A)^{1-1/p_-(A)-1+1/p_+(A)} \leq K_{p(\cdot)}^{\frac{1}{p_+(A)p_-(A)}}\leq K_{p(\cdot)}^{1/p_-^2(\Omega)} \triangleq K.$$
Hence we have $1-\frac{1}{p(\cdot)}$ satisfies \eqref{65}. Using (2), we get desired result.

(4) It follows from (1) and (2). The proof is complete.
\end{proof}

It is easy to prove that for all $B\in \mathcal{F}$
 $$ \mathbb{P}(B)^{p_-(B)-p(x)}\quad\mbox{\big(and  $\mathbb{P}(B)^{p(x)-p_+(B)}\big) $}\leq K \quad \forall x\in B,$$
 if $p(\cdot)$ satisfies \eqref{65}. Using this result, we have the following lemma.

\begin{lemma} \label{lemma52}  Let $p(\cdot)\in \mathcal{P}$ and satisfy \eqref{65} and $0< p_-\leq p_+ < \infty$. Then
for all set $B\in \mathcal{F}$, we have
$$\mathbb{P}(B)^{1/{p_-(B)}}  \approx  \mathbb{P}(B)^{1/{p(x)}} \approx \mathbb{P}(B)^{1/{p_+(B)}} \approx  \|\chi_B\|_{p(\cdot)}\quad \forall x\in B.$$ \end{lemma}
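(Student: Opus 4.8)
The plan is to reduce the four-fold equivalence to two ingredients: (i) a direct modular computation sandwiching $\|\chi_B\|_{p(\cdot)}$ between two powers of $\mathbb{P}(B)$, which uses only $0<p_-\le p_+<\infty$ together with $\mathbb{P}(B)\le 1$; and (ii) a single application of \eqref{65} showing that the smallest and largest among the powers $\mathbb{P}(B)^{1/p_-(B)}$, $\mathbb{P}(B)^{1/p(x)}$, $\mathbb{P}(B)^{1/p_+(B)}$ differ only by a constant depending on $p(\cdot)$.

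First I would dispose of the degenerate case $\mathbb{P}(B)=0$, where all four quantities vanish, and assume $0<\mathbb{P}(B)\le 1$. Set $N=\|\chi_B\|_{p(\cdot)}$. Since $\chi_B\le\chi_\Omega$ and $\rho(\chi_\Omega)=\mathbb{P}(\Omega)=1$, Lemma \ref{lemma23} gives $\|\chi_\Omega\|_{p(\cdot)}=1$, and monotonicity of $\rho$ then forces $N\le 1$. By Lemma \ref{lemma22}, $\int_B N^{-p(x)}\,d\mathbb{P}=1$; since $N\le 1$ the integrand is nondecreasing in $p(x)$, whence
\[ N^{-p_-(B)}\,\mathbb{P}(B)\le 1\le N^{-p_+(B)}\,\mathbb{P}(B), \]
which rearranges to $\mathbb{P}(B)^{1/p_-(B)}\le \|\chi_B\|_{p(\cdot)}\le \mathbb{P}(B)^{1/p_+(B)}$. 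Because $p_-(B)\le p(x)\le p_+(B)$ and $\mathbb{P}(B)\le 1$, the same ordering gives $\mathbb{P}(B)^{1/p_-(B)}\le \mathbb{P}(B)^{1/p(x)}\le \mathbb{P}(B)^{1/p_+(B)}$ for $x\in B$. (Alternatively one may avoid Lemma \ref{lemma22} entirely and just evaluate the modular $\rho(\chi_B/\lambda)=\int_B\lambda^{-p(x)}\,d\mathbb{P}$ at $\lambda=\mathbb{P}(B)^{1/p_+(B)}$ and at $\lambda=\mathbb{P}(B)^{1/p_-(B)}$.)

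It then remains to show $\mathbb{P}(B)^{1/p_+(B)}\lesssim \mathbb{P}(B)^{1/p_-(B)}$, which is the only place \eqref{65} is used: since $\mathbb{P}(B)\le 1$,
\[ \frac{\mathbb{P}(B)^{1/p_+(B)}}{\mathbb{P}(B)^{1/p_-(B)}}
 = \Big(\mathbb{P}(B)^{\,p_-(B)-p_+(B)}\Big)^{\frac{1}{p_+(B)p_-(B)}}
 \le K^{\frac{1}{p_+(B)p_-(B)}}\le K^{\max\{1,\,1/p_-^2\}}, \]
using $p_+(B)p_-(B)\ge p_-^2$. Chaining this with the previous paragraph places $\|\chi_B\|_{p(\cdot)}$, $\mathbb{P}(B)^{1/p_-(B)}$, $\mathbb{P}(B)^{1/p(x)}$ and $\mathbb{P}(B)^{1/p_+(B)}$ within a fixed multiplicative constant of one another, proving the lemma.

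I do not expect a genuine obstacle; the argument is short. The only points needing care are keeping the inequality directions straight — every exponent here acts on a base in $(0,1]$, so a larger exponent yields a smaller value — and checking that the exponent $1/(p_+(B)p_-(B))$ produced after invoking \eqref{65} is bounded uniformly in $B$, which is precisely where the hypothesis $0<p_-\le p_+<\infty$ is used.
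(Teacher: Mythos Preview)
Your proof is correct and takes essentially the same approach as the paper: both arguments rest on the monotonicity $\mathbb{P}(B)^{1/p_-(B)}\le \mathbb{P}(B)^{1/p(x)}\le \mathbb{P}(B)^{1/p_+(B)}$ (base in $(0,1]$), a single invocation of \eqref{65} to bound the ratio of the two extreme powers by $K^{1/(p_+(B)p_-(B))}\le K^{1/p_-^2}$, and a modular computation placing $\|\chi_B\|_{p(\cdot)}$ in the chain. The only difference is the order---you sandwich $\|\chi_B\|_{p(\cdot)}$ first via Lemma~\ref{lemma22} and then apply \eqref{65}, whereas the paper establishes the pointwise equivalence of the three powers first and then integrates to recover $\|\chi_B\|_{p(\cdot)}$.
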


\begin{proof} Obviously, we have $\mathbb{P}(B)^{1/{p_-(B)}}  \leq \mathbb{P}(B)^{1/{p(x)}} \leq \mathbb{P}(B)^{1/{p_+(B)}}$, for all $x\in B$. Since \eqref{65}, we have
\begin{eqnarray*}
\frac{\mathbb{P}(B)^{1/{p(x)}}}{\mathbb{P}(B)^{1/{p_-(B)}}} \leq \mathbb{P}(B)^{\frac{p_-(B)-p(x)}{p_-(B)p(x)}}\leq K_{p(\cdot)}^{\frac{1}{p_-^2(\Omega)}}\triangleq K.
\end{eqnarray*}
This implies $\mathbb{P}(B)^{1/{p(x)}} \leq K \mathbb{P}(B)^{1/{p_-(B)}}$.

Then it is easy to check that $\mathbb{P}(B)^{1/{p_-(B)}}  \approx  \mathbb{P}(B)^{1/{p(x)}} \approx \mathbb{P}(B)^{1/{p_+(B)}}$. And we
have
$$\frac{\chi_B(x)}{\mathbb{P}(B)^{1/{p_-(B)}}} \approx  \frac{\chi_B(x)}{\mathbb{P}(B)^{1/{p(x)}}},$$
that is
$$\left(\frac{\chi_B(x)}{\mathbb{P}(B)^{1/{p_-(B)}}}\right)^{p(x)} \geq \frac{\chi_B(x)}{\mathbb{P}(B)}\geq \left(\frac{\chi_B(x)}{K \mathbb{P}(B)^{1/{p_-(B)}}}\right)^{p(x)}.$$
So
$$\int_\Omega \left(\frac{\chi_B(x)}{\mathbb{P}(B)^{1/{p_-(B)}}}\right)^{p(x)} d\mathbb{P} \approx  \int_\Omega \frac{\chi_B(x)}{\mathbb{P}(B)} d\mathbb{P} =1. $$
Consequently, $\|\chi_B\|_{p(\cdot)} \approx \mathbb{P}(B)^{1/{p_-(B)}}$ and we get the desired result.
\end{proof}

\begin{remark} Lemma \ref{lemma52} is also true for $p_+=\infty.$ In this case, we need to employ a slightly different definition of $\|\cdot\|_{p(\cdot)}$; see Definition 2.16 in \cite{CF}.

\end{remark}

\begin{corollary} \label{corollary53}Let $p(\cdot) \in \mathcal{P}$ satisfy \eqref{65} with $0<p_-\leq p_+<\infty$.
\begin{enumerate}
\item Then for all set $B\in \mathcal{F}$, we have $$\|\chi_B\|_1 \approx \|\chi_B\|_{p(\cdot)}\|\chi_B\|_{q(\cdot)},$$
where
$$1= \frac{1}{p(x)}+\frac{1}{q(x)}.$$

\item Let $q(\cdot) \in \mathcal{P}$ and satisfies \eqref{65}. Then
for all set $B\in \mathcal{F}$, we have
$$\|\chi_B\|_{r(\cdot)} \approx \|\chi_B\|_{p(\cdot)}\|\chi_B\|_{q(\cdot)},$$
where
$$\frac{1}{r(x)}= \frac{1}{p(x)}+\frac{1}{q(x)}.$$
\end{enumerate}
\end{corollary}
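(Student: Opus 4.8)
The plan is to deduce both statements directly from Lemma \ref{lemma52} and from the closure properties of condition \eqref{65} collected in Proposition \ref{proposition51}, so that no fresh estimate is needed. The mechanism is this: Lemma \ref{lemma52} says that whenever an exponent satisfies \eqref{65}, the $p(\cdot)$-norm of an indicator $\chi_B$ is comparable to $\mathbb{P}(B)^{1/p(x)}$ \emph{for every} $x\in B$, with comparison constants depending only on the exponent (through $K$ and $p_-(\Omega)$) and not on $B$ nor on the evaluation point. So I would first verify that the auxiliary exponents appearing on the left-hand sides also satisfy \eqref{65}, then fix a single point $x\in B$, rewrite each indicator norm through Lemma \ref{lemma52}, and multiply, using the pointwise relation between the reciprocals of the exponents.

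For part (1): since $1/p(x)+1/q(x)=1$, Proposition \ref{proposition51}(3) tells us $q(\cdot)$ satisfies \eqref{65} (and the constant exponent $1$ trivially does). Then for any $B\in\mathcal{F}$ with $\mathbb{P}(B)>0$ and any fixed $x\in B$, Lemma \ref{lemma52} gives
$$\|\chi_B\|_{p(\cdot)}\,\|\chi_B\|_{q(\cdot)} \approx \mathbb{P}(B)^{1/p(x)}\,\mathbb{P}(B)^{1/q(x)} = \mathbb{P}(B)^{1/p(x)+1/q(x)} = \mathbb{P}(B) = \|\chi_B\|_1,$$
with constants independent of $B$; the case $\mathbb{P}(B)=0$ is trivial. (One direction, $\|\chi_B\|_1\lesssim\|\chi_B\|_{p(\cdot)}\|\chi_B\|_{q(\cdot)}$, is also immediate from the generalized H\"older inequality, Lemma \ref{lemma24}, applied to $\chi_B=\chi_B\cdot\chi_B$, but the displayed computation already yields both directions at once.)

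For part (2): since $q(\cdot)$ satisfies \eqref{65} and $1/p(x)+1/q(x)=1/r(x)$, Proposition \ref{proposition51}(4) gives that $r(\cdot)$ satisfies \eqref{65} as well, so Lemma \ref{lemma52} applies to all of $p(\cdot),q(\cdot),r(\cdot)$; the same one-line computation --- fix $x\in B$, apply Lemma \ref{lemma52} three times, and use $1/p(x)+1/q(x)=1/r(x)$ --- yields $\|\chi_B\|_{p(\cdot)}\|\chi_B\|_{q(\cdot)}\approx\mathbb{P}(B)^{1/r(x)}\approx\|\chi_B\|_{r(\cdot)}$. Since everything collapses to Lemma \ref{lemma52} and Proposition \ref{proposition51}, I do not expect any real obstacle; the one point deserving care is that the comparability in Lemma \ref{lemma52} is uniform in both $B$ and the chosen point $x\in B$, so that multiplying the three comparisons does not leak a $B$-dependent constant --- and, as above, the degenerate set $\mathbb{P}(B)=0$ should be disposed of separately.
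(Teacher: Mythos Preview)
Your proposal is correct and follows essentially the same approach as the paper: the paper's proof is a single displayed line invoking Proposition \ref{proposition51} and Lemma \ref{lemma52} to write $\|\chi_B\|_{r(\cdot)}\approx \mathbb{P}(B)^{1/r(x)}=\mathbb{P}(B)^{1/p(x)+1/q(x)}\approx\|\chi_B\|_{p(\cdot)}\|\chi_B\|_{q(\cdot)}$ for $x\in B$, which is exactly your argument. Your additional remarks on uniformity in $B$ and $x$, the degenerate case $\mathbb{P}(B)=0$, and the H\"older alternative for one direction are all sound but go beyond what the paper spells out.
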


\begin{proof} It follows from Proposition \ref{proposition51} and Lemma \ref{lemma52} that
$$\|\chi_B\|_{r(\cdot)}\approx \mathbb{P}(B)^{\frac{1}{r(x)}}=\mathbb{P}(B)^{{\frac{1}{p(x)}+\frac{1}{q(x)}}}\approx\|\chi_B\|_{p(\cdot)}\|\chi_B\|_{q(\cdot)}
, \quad \forall x\in B.$$
\end{proof}

As application of atomic decomposition, we now prove
a duality theorem. First let
us introduce the new Lipschitz spaces with variable exponents.

\begin{definition}   Given $1/\alpha(\cdot)$ is a
variable exponent ($1/\alpha(\cdot)=\infty$ is allowed) and a
constant $1\leq q <\infty$. Define $\Lambda_q(\alpha(\cdot))$ as the
space of functions $f \in L^q$ for which
$$\|f\|_{\Lambda_q(\alpha(\cdot))} = \sup_{\tau\in \mathcal{T}} \left\| \chi_{\{ \tau<\infty \}} \right\|_{\frac{1}{\alpha(\cdot)}}^{-1}  \left\| \chi_{\{ \tau<\infty \}} \right\|_{q}^{-1}  \| f-f^\tau\|_q$$
is finite.
\end{definition}

\begin{theorem} Given $p(\cdot) \in \mathcal{P},~0<p_- \leq p_+
\leq1$ and $p(\cdot)$ satisfies \eqref{65}. Then
$$\Big(H_{p(\cdot)}^s\Big)^*=\Lambda_2(\alpha(\cdot)),\quad \alpha(x) =1/{p\left(x\right)}-1.$$
\end{theorem}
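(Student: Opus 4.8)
The plan is to prove the duality $\big(H_{p(\cdot)}^s\big)^* = \Lambda_2(\alpha(\cdot))$ with $\alpha(x) = 1/p(x) - 1$ by using the atomic decomposition from Theorem \ref{theorem42} together with Corollary \ref{corollary53}.

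First I would show that every $g \in \Lambda_2(\alpha(\cdot))$ induces a bounded linear functional on $H_{p(\cdot)}^s$. Given $f \in H_{p(\cdot)}^s$, write $f = \sum_k \mu_k a^k$ via Theorem \ref{theorem42} with $\mathcal{A}(\{\mu_k\},\{a^k\},\{\tau_k\}) \lesssim \|f\|_{H_{p(\cdot)}^s}$, and define $\ell_g(f) = \E(fg) = \sum_k \mu_k \E(a^k g)$. For each atom $a^k$ with stopping time $\tau_k$, since $\E(a^k|\mathcal{F}_n) = 0$ for $n \leq \tau_k$ we have $\E(a^k g) = \E(a^k(g - g^{\tau_k}))$, and by Cauchy–Schwarz together with condition $(2)$ of Definition \ref{definition11},
\begin{equation*}
|\E(a^k g)| \leq \|a^k\|_2 \|g - g^{\tau_k}\|_2 \lesssim \|s(a^k)\|_2 \cdot \|\chi_{\{\tau_k<\infty\}}\|_2 \cdot \frac{\|g - g^{\tau_k}\|_2}{\|\chi_{\{\tau_k<\infty\}}\|_2} \leq \|\chi_{\{\tau_k<\infty\}}\|_{p(\cdot)}^{-1} \|\chi_{\{\tau_k<\infty\}}\|_2 \|g\|_{\Lambda_2(\alpha(\cdot))} \|\chi_{\{\tau_k<\infty\}}\|_{1/\alpha(\cdot)}.
\end{equation*}
Now the key point: since $1/p(x) = 1 + \alpha(x) = 1/1 + 1/(1/\alpha(x))$... more precisely I want $\|\chi_B\|_2^{-1}\|\chi_B\|_{1/\alpha(\cdot)} \approx \|\chi_B\|_{p(\cdot)}^{-1}\|\chi_B\|_1$ up to constants, which follows from Corollary \ref{corollary53} and Lemma \ref{lemma52} once one checks the exponent arithmetic $1/p(x) = 1/2 + (1/2 - 1 + 1/p(x))$ is not quite it — rather one uses that $\|\chi_B\|_r \approx \mathbb{P}(B)^{1/r_-(B)}$ for all the relevant exponents and that $\mathbb{P}(B)^{1/2}\cdot\mathbb{P}(B)^{\alpha_-(B)} = \mathbb{P}(B)^{1/2 + \alpha_-(B)}$, with $1/2 + \alpha(x) \ne 1/p(x)$ in general, so one must instead directly combine exponents so that the product $\|s(a^k)\|_2\|\chi_{\{\tau_k<\infty\}}\|_2 \cdot \|\chi_{\{\tau_k<\infty\}}\|_{1/\alpha(\cdot)}$ telescopes to $\|\chi_{\{\tau_k<\infty\}}\|_{p(\cdot)}^{-1}\|\chi_{\{\tau_k<\infty\}}\|_{p(\cdot)} = 1$ — which works precisely because $\|\chi_B\|_2 \|\chi_B\|_{1/\alpha(\cdot)} \approx \mathbb{P}(B)^{1/2}\mathbb{P}(B)^{\alpha_-(B)}$ and one needs the atom estimate to have been set up with the $\|\cdot\|_2$-size replaced by $\mathbb{P}(\{\tau_k<\infty\})^{1/2}$; then $|\E(a^k g)| \lesssim \|g\|_{\Lambda_2(\alpha(\cdot))}$. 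Summing and using $p_+ \leq 1$ so that $\sum_k \mu_k \leq \mathcal{A} \lesssim \|f\|_{H_{p(\cdot)}^s}$ (Proposition \ref{proposition41}(2)) gives $|\ell_g(f)| \lesssim \|g\|_{\Lambda_2(\alpha(\cdot))}\|f\|_{H_{p(\cdot)}^s}$; one also checks $\ell_g$ is well-defined independent of the decomposition by a standard argument on the dense subspace $L_2 \subset H_{p(\cdot)}^s$.

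Second I would prove the converse inclusion $\big(H_{p(\cdot)}^s\big)^* \subset \Lambda_2(\alpha(\cdot))$. Given $\ell \in \big(H_{p(\cdot)}^s\big)^*$, observe that for any $\tau \in \mathcal{T}$ the map $L_2 \ni h \mapsto \E(h(f - f^\tau))$-type arguments show $L_2^0 := \{h \in L_2 : \E(h|\mathcal{F}_0) = 0\}$ embeds continuously into $H_{p(\cdot)}^s$ (using that $s(h) \le \|\chi_\Omega\|$-type bounds make $\|h\|_{H_{p(\cdot)}^s} \lesssim \|h\|_2$ on a probability space — this needs $p_+ \le 1$ and Lemma \ref{lemma52}), so by the $L_2$ self-duality there is $g \in L_2$ with $\ell(h) = \E(hg)$ for all $h$ in this dense class. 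To estimate $\|g\|_{\Lambda_2(\alpha(\cdot))}$, fix a stopping time $\tau$ and test $\ell$ against the function $h = (g - g^\tau)$ itself (suitably truncated so it lies in $L_2^0$ relative to the stopped filtration): then $h$ normalized by $\|\chi_{\{\tau<\infty\}}\|_{p(\cdot)}^{-1}\|s(h)\|_\infty^{-1}$ is (a multiple of) a $(1,p(\cdot),\infty)$-atom, so $\|h\|_{H_{p(\cdot)}^s} \lesssim \|\chi_{\{\tau<\infty\}}\|_{p(\cdot)} \|s(h)\|_\infty \lesssim \|\chi_{\{\tau<\infty\}}\|_{p(\cdot)}\|g - g^\tau\|_\infty$; combining with $\E(h g) = \E(|g - g^\tau|^2) = \|g - g^\tau\|_2^2$ and $|\E(hg)| \le \|\ell\| \|h\|_{H_{p(\cdot)}^s}$ yields, after dividing by $\|g-g^\tau\|_2$ and invoking Corollary \ref{corollary53} to rewrite $\|\chi_{\{\tau<\infty\}}\|_{p(\cdot)}$ as $\|\chi_{\{\tau<\infty\}}\|_{1/\alpha(\cdot)}\|\chi_{\{\tau<\infty\}}\|_2 / \|\chi_{\{\tau<\infty\}}\|_1 \cdot (\text{const})$, the bound $\|\chi_{\{\tau<\infty\}}\|_{1/\alpha(\cdot)}^{-1}\|\chi_{\{\tau<\infty\}}\|_2^{-1}\|g - g^\tau\|_2 \lesssim \|\ell\|$. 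Taking the supremum over $\tau$ gives $g \in \Lambda_2(\alpha(\cdot))$ with $\|g\|_{\Lambda_2(\alpha(\cdot))} \lesssim \|\ell\|$.

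The main obstacle I expect is the bookkeeping in the converse direction: one must carefully control the $L_\infty$-size of $s(g - g^\tau)$ (or reduce to finitely many scales so the relevant truncated function is genuinely in $L_2$ and genuinely a multiple of an atom) so that the test function is admissible, and one must be sure the functional $\ell$ is represented by a single fixed $g \in L_2$ uniformly — i.e. that the family of $L_2$-functions arising from testing against $H_{p(\cdot)}^s$ is consistent. This is exactly where condition \eqref{65} does its work, through Corollary \ref{corollary53} and Lemma \ref{lemma52}, which convert the $p(\cdot)$-quasinorm of $\chi_{\{\tau<\infty\}}$ into the product $\|\chi_{\{\tau<\infty\}}\|_2\|\chi_{\{\tau<\infty\}}\|_{1/\alpha(\cdot)}$ that appears in the definition of $\Lambda_2(\alpha(\cdot))$; without \eqref{65} these norms are not comparable and the duality pairing does not close. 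The direct inclusion, by contrast, is essentially a routine atomic argument once the exponent identity $1/p = 1/2 + (1/p - 1/2)$ is organized through Corollary \ref{corollary53}.
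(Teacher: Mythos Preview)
Your direct inclusion (Part~1) is essentially the paper's argument, though your exponent bookkeeping wanders. The clean identity is simply
\[
\frac{1}{p(x)} = \frac{1}{2} + \alpha(x) + \frac{1}{2},
\]
which via Corollary~\ref{corollary53} gives $\|\chi_B\|_{p(\cdot)} \approx \|\chi_B\|_2\,\|\chi_B\|_{1/\alpha(\cdot)}\,\|\chi_B\|_2$. With this and $\|a^k\|_2 = \|s(a^k)\|_2 \le \|s(a^k)\|_\infty \|\chi_{\{\tau_k<\infty\}}\|_2 \le \|\chi_{\{\tau_k<\infty\}}\|_{p(\cdot)}^{-1}\|\chi_{\{\tau_k<\infty\}}\|_2$, the estimate $|\E(a^k g)| \lesssim \|g\|_{\Lambda_2(\alpha(\cdot))}$ drops out immediately, and then Proposition~\ref{proposition41}(2) finishes it exactly as you say.

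Your converse inclusion has a genuine gap. You test $\ell$ against $h = g - g^\tau$ and try to treat $h$ (after normalization) as a $(1,p(\cdot),\infty)$-atom, which forces you to control $\|s(h)\|_\infty$. Even granting a truncation that makes $\|s(h)\|_\infty$ finite, your chain yields
\[
\|g-g^\tau\|_2^2 \le \|\ell\|\,\|h\|_{H^s_{p(\cdot)}} \lesssim \|\ell\|\,\|\chi_{\{\tau<\infty\}}\|_{p(\cdot)}\,\|s(g-g^\tau)\|_\infty,
\]
and after dividing by $\|g-g^\tau\|_2$ and using $\|\chi_B\|_{p(\cdot)} \approx \|\chi_B\|_{1/\alpha(\cdot)}\|\chi_B\|_2^2$ you would need $\|s(g-g^\tau)\|_\infty \lesssim \|g-g^\tau\|_2 / \|\chi_{\{\tau<\infty\}}\|_2$, which is false in general: the $L^\infty$ size of the square function can be arbitrarily large compared to its $L^2$ average on the support. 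The bound $\|s(h)\|_\infty \lesssim \|g-g^\tau\|_\infty$ you wrote is also not available.

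The paper sidesteps this entirely. It does \emph{not} insist that the test function be an atom; instead it sets
\[
g = \frac{\varphi - \varphi^\tau}{\|\varphi-\varphi^\tau\|_2\,\|\chi_{\{\tau<\infty\}}\|_{1/\alpha(\cdot)}\,\|\chi_{\{\tau<\infty\}}\|_2}
\]
and bounds $\|g\|_{H^s_{p(\cdot)}} = \|s(g)\|_{p(\cdot)}$ directly via the variable H\"older inequality with the same split $1/p = 1/2 + \alpha + 1/2$: since $s(\varphi-\varphi^\tau)$ is supported on $\{\tau<\infty\}$,
\[
\|s(\varphi-\varphi^\tau)\|_{p(\cdot)} \lesssim \|s(\varphi-\varphi^\tau)\|_2\,\|\chi_{\{\tau<\infty\}}\|_{1/\alpha(\cdot)}\,\|\chi_{\{\tau<\infty\}}\|_2,
\]
and $\|s(\varphi-\varphi^\tau)\|_2 = \|\varphi-\varphi^\tau\|_2$. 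This gives $\|g\|_{H^s_{p(\cdot)}} \lesssim 1$ with no $L^\infty$ control needed, and then $\ell(g) = \|\chi_{\{\tau<\infty\}}\|_{1/\alpha(\cdot)}^{-1}\|\chi_{\{\tau<\infty\}}\|_2^{-1}\|\varphi-\varphi^\tau\|_2$ finishes the estimate. Replace your atomic normalization in the converse by this H\"older argument and the proof closes.
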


\begin{proof}  We first claim that $\alpha(\cdot)$ satisfies \eqref{65} by Proposition \ref{proposition51}(1). Let $\varphi \in
\Lambda_2(\alpha(\cdot)) \subset L^2$ and for all $f\in L^2$, define
$$l_\varphi(f)=\mathbb{E}(f \varphi).$$ We shall show that $l_\varphi$ is a
bounded linear functional on $H_{p(\cdot)}^s$. By Theorem \ref{theorem42}, we know that $L^2$ is
dense  in $H_{p(\cdot)}^s$. Take the same stopping
times $\tau_k$, atoms $a^k$ and nonnegative numbers $\mu_k$ as we
do in Theorem \ref{theorem42}. It follows from Theorem \ref{theorem42} that $f=\sum_{k\in
\mathbb{Z}} \mu_k a^k \;(\forall f \in L_2)$. Hence
$$l_\varphi(f)=\mathbb{E}(f \varphi) = \sum_{k\in \mathbb{Z}} \mu_k \mathbb{E}(a^k \varphi).$$
By the definition of the atom $a^k$, $\mathbb{E}(a^k \varphi) =
\mathbb{E}(a^k (\varphi- \varphi^{\tau_k})) $ always holds. It
follows from Corollary \ref{corollary53} that
$$\left\|\chi_{\{\tau_k<\infty\}} \right\|_{p(\cdot)}  \thickapprox \left\|\chi_{\{\tau_k<\infty\}}\right\|_{\frac{1}{\alpha(\cdot) }} \left\|\chi_{\{\tau_k<\infty\}}\right\|_2 \left\|\chi_{\{\tau_k<\infty\}}\right\|_2.$$
Thus, using H\"{o}lder's inequality we can conclude that
\begin{eqnarray*}
|l_\varphi(f)| &\leq& \sum_{k\in \mathbb{Z}} \mu_k \int_\Omega |a^k|
|\varphi- \varphi^{\tau_k}| d\mathbb{P}
\\&\leq& \sum_{k\in \mathbb{Z}} \mu_k \|a^k\|_2 \|\varphi- \varphi^{\tau_k}\|_2
\\&\leq& \sum_{k\in \mathbb{Z}} \mu_k \frac{|\{\tau_k<\infty\}|^{\frac{1}{2}}}{\left\|\chi_{\{\tau_k<\infty\}} \right\|_{p(\cdot)} } \|\varphi- \varphi^{\tau_k}\|_2
\\&\lesssim& \sum_{k\in \mathbb{Z}} \mu_k \|\varphi\|_{\Lambda_2(\alpha(\cdot))}.
\end{eqnarray*}
Then, we obtain from Proposition \ref{proposition41} and Theorem \ref{theorem42} that
$$|l_\varphi(f)| \lesssim \|f\|_{H_{p(\cdot)}^s} \|\varphi\|_{\Lambda_2(\alpha(\cdot))}.$$
Consequently, $l_\varphi$ can be
extended to $H_{p(\cdot)}^s$ uniquely.

On the other hand, let $l$ be an arbitrary bounded linear functional
on $H_{p(\cdot)}^s$. We shall show that there exists $\varphi \in
\Lambda_2(\alpha(\cdot))$ such that $l=l_\varphi$ and
$$\|\varphi\|_{\Lambda_2(\alpha(\cdot))} \lesssim \|l\| .$$
Since $0<p_- \leq p_+ \leq1$,  thus it follows from Lemma \ref{lemma21} and Theorem 2.8 in \cite{KR} that
\begin{eqnarray*}
\|f\|_{H_{p(\cdot)}^s} &=& \|s(f)\|_{p(\cdot)}=\|s(f)^{p_-}\|_{\frac{p(\cdot)}{p_-}}^{\frac{1}{p_-}}
\\&\leq & \Big(2\|s(f)^{p_-}\|_{\frac{2}{p_-}}\Big)^{\frac{1}{p_-}} =2^{\frac{1}{p_-}}\|s(f)\|_{2}=2^{\frac{1}{p_-}}\|f\|_{2},\quad \forall f\in L^2.
\end{eqnarray*}
%$$\|f\|_{H_{p(\cdot)}^s} \leq  \|s(f)\|_2 =\|f\|_2,\quad \forall f\in L^2.$$
Then the space $L^2$ can be embedded continuously in $H_{p(\cdot)}^s$.
Consequently, there exists $\varphi \in L^2$ such
that
$$l(f)=\mathbb{E}(f \varphi),~~~~~~\forall f\in L^2.$$
Let $\tau$ be an arbitrary stopping time and
$$g = \frac{\varphi -\varphi^\tau } { \|\varphi- \varphi^\tau\|_2  \left\| \chi_{\{ \tau<\infty \}} \right\|_{\frac{1}{\alpha(\cdot)}}  \left\|\chi_{\{ \tau<\infty \}}\right\|_2 } .$$
Then $g$ is not necessarily a $(1,p(\cdot),\infty)$-atom but it
satisfies $(1)$ in Definition \ref{definition11}, thus we have
$$s(g) = s(g) \chi_{\{ \tau<\infty \}}.$$
Since $$\frac{1}{p(x)} = \frac{1}{2}+ \frac{1}{1/\alpha(x)}+
\frac{1}{2},$$ then by H\"{o}lder's inequality we get
\begin{eqnarray*}
\|g\|_{H_{p(\cdot)}^s} &=& \frac{\|s(\varphi
-\varphi^\tau)\|_{p(\cdot)} } { \left\|\varphi- \varphi^\tau
\right\|_2 \left\| \chi_{\{ \tau<\infty \}}
\right\|_{\frac{1}{\alpha(\cdot)}} \left\|\chi_{\{ \tau<\infty
\}}\right\|_2 }
\\&\lesssim& \frac{\|s(\varphi -\varphi^\tau)\|_2 \left\| \chi_{\{ \tau<\infty \}} \right\|_{\frac{1}{\alpha(\cdot)}} \left\| \chi_{\{ \tau<\infty \}} \right\|_2 }
{ \|\varphi- \varphi^\tau\|_2 \left\|\chi_{\{ \tau<\infty \}}
\right\|_{\frac{1}{\alpha(\cdot)}} \left\|\chi_{\{ \tau<\infty
\}}\right\|_2 }
\\&=& 1.
\end{eqnarray*}
Thus
\begin{eqnarray*}
\|l\| \gtrsim l(g) &=& \mathbb{E}\left(g(\varphi-
\varphi^\tau)\right)
\\&=& \left\| \chi_{\{ \tau<\infty \}} \right\|_{\frac{1}{\alpha(\cdot)}}^{-1}  \left\| \chi_{\{ \tau<\infty \}} \right\|_{2}^{-1}  \| \varphi-\varphi^\tau\|_2
\end{eqnarray*}
and we get that $\|\varphi\|_{\Lambda_2(\alpha(\cdot))} \lesssim
\|l\|$ and the proof is complete.
\end{proof}

 We now turn to the John-Nirenberg theorem with variable exponents. Recall that $BMO_p(1\leq p <\infty)$ is the space of those functions $f$ for which
$$\|f\|_{BMO_{p}}=\sup_{\tau \in \mathcal{T} } \|\chi_{\{\tau<\infty\}}\|_p^{-1} \|f-f^{\tau-1}\|_p<\infty.$$

\begin{definition}  Given $p(\cdot) \in \mathcal{P}$ and $\mathcal{T}$ be the sets of all stopping times relative to $\{\mathcal{F}_n\}_{n\geq0}$. Define
$$BMO_{p(\cdot)}=\left\{ f=(f_n)_{n\geq0}: \|f\|_{BMO_{p(\cdot)}}< \infty \right\},$$
where
$$\|f\|_{BMO_{p(\cdot)}}=\sup_{\tau \in \mathcal{T} } \left\|\chi_{\{\tau<\infty\}}\right\|_{p(\cdot)}^{-1} \|f-f^{\tau-1}\|_{p(\cdot)}.$$
\end{definition}

\begin{lemma} \label{lemma57}(see \cite{FW1}) If $1\leq p <\infty$, then
$$\|f\|_{BMO_1} \thickapprox \|f\|_{BMO_{p}}.$$ \end{lemma}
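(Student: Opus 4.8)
The inequality $\|f\|_{BMO_1}\lesssim\|f\|_{BMO_p}$ is elementary, and I would dispose of it first: since $f-f^{\tau-1}$ is supported on $\{\tau<\infty\}$, H\"{o}lder's inequality gives $\|f-f^{\tau-1}\|_1\le\|f-f^{\tau-1}\|_p\,\mathbb{P}(\tau<\infty)^{1-1/p}$, and dividing by $\|\chi_{\{\tau<\infty\}}\|_1=\mathbb{P}(\tau<\infty)$ converts the surplus power into exactly $\|\chi_{\{\tau<\infty\}}\|_p^{-1}$; a supremum over $\tau\in\mathcal{T}$ then finishes this half. The substance is the reverse (John--Nirenberg) inequality, and the first step of my plan is the pointwise reformulation
$$\|f\|_{BMO_p}^{\,p}=\sup_{n\ge0}\Big\|\,\mathbb{E}\big(\,|f_\infty-f_{n-1}|^{p}\,\big|\,\mathcal{F}_n\big)\,\Big\|_\infty ,$$
which I would get from the decomposition $\{\tau<\infty\}=\bigsqcup_{n}\{\tau=n\}$ with $\{\tau=n\}\in\mathcal{F}_n$, together with the fact that every $A\in\mathcal{F}_n$ arises as $\{\tau<\infty\}$ for the stopping time equal to $n$ on $A$ and to $\infty$ elsewhere. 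From $\|f\|_{BMO_1}\le1$ I would then extract the two facts that power the iteration: $\mathbb{E}(|f_\infty-f_{k}|\,|\,\mathcal{F}_k)\le1$ for every $k\ge0$, and, since $df_k=\mathbb{E}(f_\infty-f_{k-1}\,|\,\mathcal{F}_k)$, the uniform bound $|df_k|\le1$ on the martingale differences. This bounded--jump property is built into the ``$f^{\tau-1}$'' normalization and is precisely what will let me avoid any regularity assumption on the filtration.

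Next, normalizing $\|f\|_{BMO_1}\le1$ and fixing $n$, I would prove the exponential conditional tail estimate $\mathbb{P}(|f_\infty-f_n|>\lambda\,|\,\mathcal{F}_n)\le Ce^{-\gamma\lambda}$ a.e. To this end, fix $L>1$ (say $L=4$) and introduce the increasing stopping times $\nu_0=n$, $\nu_{j+1}=\inf\{k>\nu_j:|f_k-f_{\nu_j}|>L\}$. By Doob's maximal inequality for the martingale $\big(\mathbb{E}(|f_\infty-f_{\nu_j}|\,|\,\mathcal{F}_k)\big)_{k\ge\nu_j}$, which dominates $(|f_k-f_{\nu_j}|)_{k\ge\nu_j}$, together with $\mathbb{E}(|f_\infty-f_{\nu_j}|\,|\,\mathcal{F}_{\nu_j})\le1$ on $\{\nu_j<\infty\}$, one obtains $\mathbb{P}(\nu_{j+1}<\infty\,|\,\mathcal{F}_{\nu_j})\le1/L=:\theta<1$ there, hence $\mathbb{P}(\nu_j<\infty\,|\,\mathcal{F}_n)\le\theta^{\,j}$. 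The jump bound gives $|f_{\nu_{j+1}}-f_{\nu_j}|\le L+1$, so $|f_{\nu_j}-f_n|\le j(L+1)$, while on $\{\nu_{J-1}<\infty,\ \nu_J=\infty\}$ one has $|f_\infty-f_{\nu_{J-1}}|\le L$ and therefore $|f_\infty-f_n|\le J(L+1)$. Summing $\mathbb{P}(\nu_{J-1}<\infty\,|\,\mathcal{F}_n)\le\theta^{\,J-1}$ over the indices $J$ with $J(L+1)>\lambda$ yields the claimed bound, with $\gamma=\log(1/\theta)/(L+1)$.

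It then remains to integrate the tail: $\mathbb{E}(|f_\infty-f_n|^{p}\,|\,\mathcal{F}_n)=p\int_0^\infty\lambda^{p-1}\mathbb{P}(|f_\infty-f_n|>\lambda\,|\,\mathcal{F}_n)\,d\lambda\le C_p$, and then to pass from $f_n$ back to $f_{n-1}$ via $|f_\infty-f_{n-1}|\le|f_\infty-f_n|+|df_n|\le|f_\infty-f_n|+1$, obtaining $\sup_n\|\mathbb{E}(|f_\infty-f_{n-1}|^{p}\,|\,\mathcal{F}_n)\|_\infty\le C_p'$; by the reformulation this reads $\|f\|_{BMO_p}\lesssim_p\|f\|_{BMO_1}$ after undoing the normalization, which combined with the easy half gives $\|f\|_{BMO_1}\thickapprox\|f\|_{BMO_p}$. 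I expect the delicate point to be not the stopping-time iteration (this is the classical martingale John--Nirenberg scheme) but rather the first paragraph: establishing the conditional-expectation description of $\|f\|_{BMO_p}$ and, above all, the uniform bound $|df_k|\le\|f\|_{BMO_1}$, without which each step of the iteration would leak an uncontrolled increment $|df_{\nu_{j+1}}|$. Finiteness of the limits involved is harmless, since $BMO_p\subset BMO_1\subset L^1$ and the martingales in question are uniformly integrable, so $f_\infty$ exists with $f_k=\mathbb{E}(f_\infty\,|\,\mathcal{F}_k)$; alternatively one argues first for bounded $f$ and then lets the bound tend to infinity.
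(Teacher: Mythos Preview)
The paper does not supply its own proof of this lemma; it simply cites Weisz's monograph \cite{FW1}. So there is no in-paper argument to compare against.

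Your proof is correct and is essentially the classical martingale John--Nirenberg argument (this is indeed the scheme one finds in Weisz and in Garsia): the easy direction by H\"older, the reformulation $\|f\|_{BMO_p}^{p}=\sup_n\|\mathbb{E}(|f_\infty-f_{n-1}|^p\mid\mathcal{F}_n)\|_\infty$ via the correspondence between stopping times and sets $A\in\mathcal{F}_n$, the uniform increment bound $|df_k|\le\|f\|_{BMO_1}$, and the geometric stopping-time iteration $\nu_{j+1}=\inf\{k>\nu_j:|f_k-f_{\nu_j}|>L\}$ together with conditional Doob to get $\mathbb{P}(\nu_{j+1}<\infty\mid\mathcal{F}_{\nu_j})\le 1/L$. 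The passage from $f_n$ back to $f_{n-1}$ at the end is handled correctly by the increment bound. Since the paper treats this result as a known black box from the literature, your write-up in fact supplies strictly more than the paper does.
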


\begin{proposition}\label{JN}  If $p(\cdot)\in\mathcal{P}$ satisfies \eqref{65} and $1\leq p_- \leq p_+ < \infty$, then we have that for all
$f\in BMO_1$
$$\|f \|_{BMO_1}\lesssim \|f \|_{BMO_{p(\cdot)}} \lesssim \|f \|_{BMO_1}.$$
\end{proposition}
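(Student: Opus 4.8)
The plan is to prove the chain of inequalities $\|f\|_{BMO_1}\lesssim\|f\|_{BMO_{p(\cdot)}}\lesssim\|f\|_{BMO_1}$ by comparing, for each fixed stopping time $\tau\in\mathcal{T}$, the quantity $\|\chi_{\{\tau<\infty\}}\|_{p(\cdot)}^{-1}\|f-f^{\tau-1}\|_{p(\cdot)}$ with the corresponding constant-exponent quantity $\|\chi_{\{\tau<\infty\}}\|_{r}^{-1}\|f-f^{\tau-1}\|_{r}$ for a suitable constant $r$. The natural choice is to play the endpoints $r=p_-$ and $r=p_+$ off against each other. The key observations are: (i) the function $f-f^{\tau-1}$ is supported on $\{\tau<\infty\}$, so all the integrals below are really integrals over $\{\tau<\infty\}$; and (ii) by Lemma \ref{lemma52}, condition \eqref{65} gives $\|\chi_{\{\tau<\infty\}}\|_{p(\cdot)}\approx\mathbb{P}(\tau<\infty)^{1/p_-(\{\tau<\infty\})}\approx\mathbb{P}(\tau<\infty)^{1/p_+(\{\tau<\infty\})}$, so the normalizing factor is comparable to a genuine power of $\mathbb{P}(\tau<\infty)$.

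First I would reduce to estimating $\|g\|_{p(\cdot)}$ for $g=f-f^{\tau-1}$, which vanishes off $A:=\{\tau<\infty\}$. Normalizing, assume $\|g\|_{p(\cdot)}=1$ so that $\int_A|g|^{p(x)}\,d\mathbb{P}=1$ by Lemma \ref{lemma22}. Splitting $A$ into $\{|g|\geq1\}$ and $\{|g|<1\}$ and using $p_-\leq p(x)\leq p_+$ pointwise, one gets $\int_A|g|^{p_+}\,d\mathbb{P}\le 1$ on $\{|g|\ge1\}$ while on $\{|g|<1\}$ one has $|g|^{p_+}\le|g|^{p(x)}$; combining, $\|g\|_{p_+}\lesssim 1$, and symmetrically from below $1\le\int_A|g|^{p_-}\,d\mathbb{P}+\mathbb{P}(A)$, so that $\|g\|_{p_-}^{p_-}\gtrsim 1-\mathbb{P}(A)$. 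This already shows $\|g\|_{p_+}\lesssim\|g\|_{p(\cdot)}\lesssim\|g\|_{p_-}+\mathbb{P}(A)^{1/p_-}\|g\|_{p(\cdot)}$, but to turn this into a clean $BMO$ estimate I would instead homogenize properly: for general $g$ supported on $A$, one has by the same splitting argument (applied to $g/\|g\|_{p(\cdot)}$) that $\|g\|_{p_+}\lesssim\|g\|_{p(\cdot)}+\mathbb{P}(A)^{1/p_+}\|g\|_{p(\cdot)}\lesssim\|g\|_{p(\cdot)}$ and $\|g\|_{p(\cdot)}\lesssim\|g\|_{p_-}+\mathbb{P}(A)^{1/p_-}\|g\|_{p(\cdot)}$; the latter is not yet absorbable, so here one uses instead the direct inequality $\int_A|g/\|g\|_{p_-}|^{p(x)}\,d\mathbb{P}\le\int_A|g/\|g\|_{p_-}|^{p_-}\,d\mathbb{P}+\mathbb{P}(A)\cdot(\text{sup term})$ — more carefully, $\|g\|_{p(\cdot)}\lesssim \max(\|g\|_{p_-},\|g\|_{p_+})$ always holds on a probability space, and on the support $A$ one needs the reverse-type bound $\|g\|_{p_-}\lesssim\|g\|_{p(\cdot)}$, which again follows from Lemma \ref{lemma22} by splitting and noting $|g|^{p_-}\le|g|^{p(x)}$ on $\{|g|\ge1\}$ and $|g|^{p_-}\le 1$ on $A\cap\{|g|<1\}$, giving $\int_A|g/\|g\|_{p(\cdot)}|^{p_-}\le 1+\mathbb{P}(A)\le 2$.

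Putting these together: for $g$ supported on $A$ one has $\|g\|_{p_-}\lesssim\|g\|_{p(\cdot)}\lesssim\|g\|_{p_+}$ (the second inequality is the elementary one on a probability space, the first uses the splitting above). Dividing by the normalizing constants and invoking Lemma \ref{lemma52} (so that $\|\chi_A\|_{p(\cdot)}\approx\|\chi_A\|_{p_-}\approx\|\chi_A\|_{p_+}$) yields
$$
\|\chi_A\|_{p_-}^{-1}\|f-f^{\tau-1}\|_{p_-}\lesssim\|\chi_A\|_{p(\cdot)}^{-1}\|f-f^{\tau-1}\|_{p(\cdot)}\lesssim\|\chi_A\|_{p_+}^{-1}\|f-f^{\tau-1}\|_{p_+}.
$$
Taking the supremum over $\tau\in\mathcal{T}$ gives $\|f\|_{BMO_{p_-}}\lesssim\|f\|_{BMO_{p(\cdot)}}\lesssim\|f\|_{BMO_{p_+}}$, and then Lemma \ref{lemma57} identifies both outer quantities with $\|f\|_{BMO_1}$, completing the proof.

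I expect the main obstacle to be the lower bound $\|g\|_{p(\cdot)}\gtrsim\|g\|_{p_-}$ (equivalently, controlling $\|f\|_{BMO_{p_-}}$ by $\|f\|_{BMO_{p(\cdot)}}$ from above rather than the other trivial direction): one must be careful that the estimate is homogeneous in $g$ and that the error term $\mathbb{P}(A)$ coming from the region $\{|g|<1\}$ is genuinely harmless (it is, because $\mathbb{P}(A)\le1$ and it appears additively inside a modular that is being compared to $1$, not multiplicatively against $\|g\|_{p(\cdot)}$). The role of \eqref{65} is exactly to make Lemma \ref{lemma52} available so that the three normalizing constants $\|\chi_A\|_{p_-}$, $\|\chi_A\|_{p(\cdot)}$, $\|\chi_A\|_{p_+}$ are mutually comparable with constants independent of $\tau$; without it the powers $1/p_-(A)$ and $1/p_+(A)$ of $\mathbb{P}(A)$ could differ wildly as $A$ shrinks, and the argument would collapse.
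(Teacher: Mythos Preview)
Your argument has a genuine gap at the step where you invoke Lemma~\ref{lemma52} to conclude $\|\chi_A\|_{p(\cdot)}\approx\|\chi_A\|_{p_-}\approx\|\chi_A\|_{p_+}$ with the \emph{global} exponents $p_-=p_-(\Omega)$ and $p_+=p_+(\Omega)$. Lemma~\ref{lemma52} only yields $\|\chi_A\|_{p(\cdot)}\approx\mathbb{P}(A)^{1/p_-(A)}\approx\mathbb{P}(A)^{1/p_+(A)}$ with the \emph{local} exponents $p_\pm(A)$; condition~\eqref{65} controls the oscillation $p_+(A)-p_-(A)$ on $A$, but says nothing about the gap between $p_-(A)$ and the global $p_-$. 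In particular $\|\chi_A\|_{p_-}=\mathbb{P}(A)^{1/p_-}\le\mathbb{P}(A)^{1/p_-(A)}\approx\|\chi_A\|_{p(\cdot)}$, and the ratio $\mathbb{P}(A)^{1/p_-(A)-1/p_-}$ is not bounded by any constant independent of $A$. Consequently your embeddings $\|g\|_{p_-}\lesssim\|g\|_{p(\cdot)}\lesssim\|g\|_{p_+}$ and the indicator comparisons $\|\chi_A\|_{p_-}\le\|\chi_A\|_{p(\cdot)}\le\|\chi_A\|_{p_+}$ all point in the \emph{same} direction, so dividing them cannot produce the two-sided ratio estimate you claim.

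The paper's proof avoids this by using H\"older's inequality (Lemma~\ref{lemma24}) together with the factorization of indicator norms in Corollary~\ref{corollary53}: for the lower bound it writes $\|g\|_1\lesssim\|g\|_{p(\cdot)}\|\chi_A\|_{p'(\cdot)}$ and $\|\chi_A\|_1\approx\|\chi_A\|_{p(\cdot)}\|\chi_A\|_{p'(\cdot)}$, so the extra factor $\|\chi_A\|_{p'(\cdot)}$ cancels; for the upper bound it factors through the constant exponent $p_+$ via $\|g\|_{p(\cdot)}\lesssim\|g\|_{p_+}\|\chi_A\|_{r(\cdot)}$ with $r(\cdot)=p_+p(\cdot)/(p_+-p(\cdot))$, and again the H\"older factor $\|\chi_A\|_{r(\cdot)}$ is exactly what is needed to match $\|\chi_A\|_{p(\cdot)}\approx\|\chi_A\|_{p_+}\|\chi_A\|_{r(\cdot)}$. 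Your bare embedding $\|g\|_{p(\cdot)}\lesssim\|g\|_{p_+}$ throws away precisely this localizing factor. Your scheme can be repaired by replacing $p_\pm$ with the local $p_\pm(A)$ throughout (then Lemma~\ref{lemma52} applies correctly), but you must then add an extra H\"older step from $L^{p_-(A)}$ down to $L^1$, and check that the constant in Lemma~\ref{lemma57} is uniform for exponents in $[1,p_+]$ since $p_+(A)$ now depends on $\tau$.
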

\begin{proof} By H\"{o}lder's inequality and Corollary \ref{corollary53}, we have that
\begin{eqnarray*}
\frac{\|f -f^{\tau-1}\|_1}{\left\|\chi _{\{\tau < \infty\}} \right\|_1}
&\lesssim&
\frac{\left\|f -f^{\tau-1}\right\|_{p(\cdot)}\left\|\chi _{\{\tau < \infty\}} \right\|_{p'(\cdot)}}{\left\|\chi _{\{\tau < \infty\}} \right\|_1}
\\&=&
\frac{\left\|f -f^{\tau-1}\right\|_{p(\cdot)}}{\left\|\chi _{\{\tau < \infty\}} \right\|_{p(\cdot)}} \cdot \frac{\left\|\chi _{\{\tau < \infty\}} \right\|_{p(\cdot)}\left\|\chi _{\{\tau < \infty\}} \right\|_{p'(\cdot)}}{\left\|\chi _{\{\tau < \infty\}} \right\|_1}
\\&\leq&
C_{p(\cdot)} \|f\|_{BMO_{p(\cdot)}},
\end{eqnarray*}
where $$\frac{1}{p(x)}+\frac{1}{p'(x)}=1.$$
Hence $\|f\|_{BMO_1} \lesssim \|f\|_{BMO_{p(\cdot)}}$.

Since
\begin{eqnarray*}
\|f -f^{\tau-1}\|_{p(\cdot)} & \lesssim & \|f -f^{\tau-1}\|_{p_+}
\left\|\chi_{\{\tau < \infty\}} \right\|_{\frac{p_+ p(\cdot)}{p_+ -p(\cdot)}}
\\&=&
\frac{ \|f -f^{\tau-1}\|_{p_+} } { \|\chi _{\{\tau < \infty\}}\|_{p_+} }  \left\|\chi _{\{\tau < \infty\}} \right\|_{\frac{p_+p(\cdot)} {p_+ -p(\cdot)}} \|\chi _{\{\tau < \infty\}}\|_{p_+},
\end{eqnarray*}then by Lemma \ref{lemma57}, we get
$$\|f -f^{\tau-1}\|_{p(\cdot)}\lesssim \|f\|_{BMO_1}\left\|\chi _{\{\tau < \infty\}}\right\|_{\frac{p_+p(\cdot)} {p_+ -p(\cdot)}} \left\|\chi _{\{\tau < \infty\}}\right\|_{p_+}.$$
Thus by Corollary \ref{corollary53}
\begin{eqnarray*}
\frac{\left\|f -f^{\tau-1}\right\|_{p(\cdot)}}{\left\|\chi _{\tau < \infty} \right\|_{p(\cdot)}} & \lesssim &
\left\|f\right\|_{BMO_1} \left\|\chi _{\{\tau < \infty\}}\right\|_{\frac{p_+p(\cdot)} {p_+ -p(\cdot)}} \left\|\chi _{\{\tau < \infty\}}\right\|_{p_+} \left\|\chi _{\{\tau < \infty\}} \right\|_{p(\cdot)}^{-1}
\\&\lesssim &
\|f\|_{BMO_1}.
\end{eqnarray*}
This means $$\|f \|_{BMO_{p(\cdot)}} \lesssim \|f \|_{BMO_1}.$$ \end{proof}

By applying Proposition \ref{JN}, we prove the following exponential integrability form of the John-Nirenberg theorem, which should be compared with the very recent result, Theorem 3.2 in \cite{HO}.

\begin{theorem}  Let $p(\cdot) \in \mathcal{P}$ satisfy \eqref{65} and $1\leq p_-\leq p_+<\infty$, then there exist constants $C_1, C_2 >0$ such that for every $f\in BMO_1$ and $\tau \in \mathcal{T}$,
$$\Big\| \chi_{\{\tau<\infty\}\cap\{f-f_{\tau-1}\geq t\} }\Big\|_{p(\cdot)} \leq C_1 e^{-\frac{C_2 t}{\|f\|_{BMO_1}}}\|\chi_{\{\tau<\infty\}}\|_{p(\cdot)}\quad t>0.$$
\end{theorem}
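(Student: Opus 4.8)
The plan is to combine the classical exponential‑form John–Nirenberg inequality for $BMO_1$ (scalar exponent) with Lemma~\ref{lemma52} and Corollary~\ref{corollary53}, using condition~\eqref{65} to pass freely between $\mathbb{P}(A)^{1/p_-(A)}$, $\mathbb{P}(A)^{1/p(x)}$, $\mathbb{P}(A)^{1/p_+(A)}$ and $\|\chi_A\|_{p(\cdot)}$. First I would recall the classical result: there are absolute constants $c_1,c_2>0$ so that for every $f\in BMO_1$, every stopping time $\tau$, and every $t>0$,
\[
\mathbb{P}\bigl(\{\tau<\infty\}\cap\{f-f_{\tau-1}\geq t\}\bigr)\leq c_1 e^{-\frac{c_2 t}{\|f\|_{BMO_1}}}\,\mathbb{P}(\tau<\infty);
\]
this is the standard (Garsia/Weisz) martingale John–Nirenberg inequality in $L^1$ form. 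Write $E_{\tau,t}=\{\tau<\infty\}\cap\{f-f_{\tau-1}\geq t\}\subset\{\tau<\infty\}$ and set $\theta(t)=c_1 e^{-c_2 t/\|f\|_{BMO_1}}$, so $\mathbb{P}(E_{\tau,t})\leq\theta(t)\,\mathbb{P}(\tau<\infty)$.

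Next I would estimate $\|\chi_{E_{\tau,t}}\|_{p(\cdot)}$ using Lemma~\ref{lemma52}. Since $E_{\tau,t}\subset\{\tau<\infty\}$ we have, pointwise on $E_{\tau,t}$,
\[
\|\chi_{E_{\tau,t}}\|_{p(\cdot)}\approx \mathbb{P}(E_{\tau,t})^{1/p_-(E_{\tau,t})}\leq \mathbb{P}(E_{\tau,t})^{1/p_+},
\]
and similarly $\|\chi_{\{\tau<\infty\}}\|_{p(\cdot)}\approx \mathbb{P}(\tau<\infty)^{1/p_-(\{\tau<\infty\})}\geq \mathbb{P}(\tau<\infty)^{1/p_-}$ when $\mathbb{P}(\tau<\infty)\le 1$ (which always holds). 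Feeding in $\mathbb{P}(E_{\tau,t})\leq\theta(t)\,\mathbb{P}(\tau<\infty)$ gives
\[
\frac{\|\chi_{E_{\tau,t}}\|_{p(\cdot)}}{\|\chi_{\{\tau<\infty\}}\|_{p(\cdot)}}
\lesssim \frac{\mathbb{P}(E_{\tau,t})^{1/p_+}}{\mathbb{P}(\tau<\infty)^{1/p_-}}
\lesssim \theta(t)^{1/p_+}\,\mathbb{P}(\tau<\infty)^{1/p_+-1/p_-}.
\]
The factor $\mathbb{P}(\tau<\infty)^{1/p_+-1/p_-}$ is exactly what condition~\eqref{65} controls: since $1/p(\cdot)$ satisfies \eqref{65} by Proposition~\ref{proposition51}(2), or directly $\mathbb{P}(A)^{1/p_+(A)-1/p_-(A)}\le K$ as shown right after that proposition, we get $\mathbb{P}(\tau<\infty)^{1/p_+-1/p_-}\leq K$. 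Hence
\[
\|\chi_{E_{\tau,t}}\|_{p(\cdot)}\lesssim \theta(t)^{1/p_+}\|\chi_{\{\tau<\infty\}}\|_{p(\cdot)}
= c_1^{1/p_+} e^{-\frac{c_2 t}{p_+\|f\|_{BMO_1}}}\|\chi_{\{\tau<\infty\}}\|_{p(\cdot)},
\]
so $C_1=C\,c_1^{1/p_+}$ and $C_2=c_2/p_+$ work. Here I use $\|f\|_{BMO_1}\approx\|f\|_{BMO_{p(\cdot)}}$ from Proposition~\ref{JN} only if one wishes to phrase the right‑hand side via the $p(\cdot)$‑norm; as stated it is enough to work directly with $\|f\|_{BMO_1}$.

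The main obstacle is a technical wrinkle, not a conceptual one: the constants hidden in Lemma~\ref{lemma52} and in $K$ depend on $p_\pm$ and on the log‑Hölder‑free constant $K_{p(\cdot)}$, and one must make sure that the comparison $\|\chi_{E}\|_{p(\cdot)}\approx\mathbb{P}(E)^{1/p_-(E)}$ is applied with a constant uniform over \emph{all} subsets $E$ (which Lemma~\ref{lemma52} indeed provides, since its constant $K=K_{p(\cdot)}^{1/p_-^2(\Omega)}$ is set‑independent). One also has to treat the trivial cases $\mathbb{P}(\tau<\infty)=0$ (nothing to prove) and $E_{\tau,t}$ a null set separately, and absorb the $\approx$ constants into $C_1$. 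No estimate of a $p(\cdot)$‑norm of $f$ itself is needed; the whole argument only ever estimates $p(\cdot)$‑norms of characteristic functions, which is precisely the situation condition~\eqref{65} is designed for.
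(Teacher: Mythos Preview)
Your strategy is genuinely different from the paper's: the paper never invokes the classical $L^1$ John--Nirenberg inequality at all. Instead, it uses Proposition~\ref{JN} to get $\|f\|_{BMO_{rp(\cdot)}}\lesssim\|f\|_{BMO_1}$ uniformly in $r\geq1$, writes this as $\||f-f^{\tau-1}|^r\|_{p(\cdot)}\leq C_0^r\|\chi_{\{\tau<\infty\}}\|_{p(\cdot)}$ with $C_0=C\|f\|_{BMO_1}$, applies the Chebyshev-type bound $\|\chi_{E_{\tau,t}}\|_{p(\cdot)}\leq t^{-r}\||f-f^{\tau-1}|^r\chi_{\{\tau<\infty\}}\|_{p(\cdot)}$, and then optimizes over $r$ (taking $r=t/(2C_0)$ for large $t$, and the trivial bound for small $t$). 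Your route---take the exponential decay for granted in the scalar case and translate via Lemma~\ref{lemma52}---is more direct and shorter, at the cost of importing the martingale John--Nirenberg theorem as a black box.

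There is, however, one step in your argument that fails as written. You arrive at the factor $\mathbb{P}(\tau<\infty)^{1/p_+-1/p_-}$ with the \emph{global} extremes $p_\pm=p_\pm(\Omega)$ and then claim condition~\eqref{65} bounds it. But \eqref{65} (via Proposition~\ref{proposition51}(2)) only gives $\mathbb{P}(A)^{1/p_+(A)-1/p_-(A)}\leq K$ with the \emph{local} exponents; since $1/p_+-1/p_-\leq 1/p_+(A)-1/p_-(A)\leq0$ and $\mathbb{P}(A)\leq1$, the inequality goes the wrong way, and $\mathbb{P}(\tau<\infty)^{1/p_+-1/p_-}$ is in fact unbounded as $\mathbb{P}(\tau<\infty)\to0$. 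The fix is easy: stay with local exponents and use Lemma~\ref{lemma52} in the form $\|\chi_F\|_{p(\cdot)}\approx\mathbb{P}(F)^{1/p_+(F)}$ for $F=\{\tau<\infty\}$. Since $E_{\tau,t}\subset F$ gives $p_-(E_{\tau,t})\leq p_+(F)$, one has $\|\chi_{E_{\tau,t}}\|_{p(\cdot)}\lesssim\mathbb{P}(E_{\tau,t})^{1/p_+(F)}$, and hence
\[
\frac{\|\chi_{E_{\tau,t}}\|_{p(\cdot)}}{\|\chi_F\|_{p(\cdot)}}
\;\lesssim\;\Bigl(\frac{\mathbb{P}(E_{\tau,t})}{\mathbb{P}(F)}\Bigr)^{1/p_+(F)}
\;\leq\;\theta(t)^{1/p_+(F)}\;\leq\;\theta(t)^{1/p_+}
\]
whenever $\theta(t)\leq1$; small $t$ is handled trivially as you note. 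With this correction the proof is complete, and no extra $\mathbb{P}(F)^{1/p_+-1/p_-}$ factor ever appears.
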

\begin{proof}  Using Lemma \ref{lemma21} and Theorem \ref{JN}, we point out that for $r\geq 1,$
$$\sup_\tau \frac{\||f-f^{\tau-1}|^r\|_{p(\cdot)}^{1/r}}{\|\chi_{\{\tau<\infty\}}\|_{p(\cdot)}^{1/r}} =\|f\|_{BMO_{rp(\cdot)}}\leq C \|f\|_{BMO_1} \triangleq C_0.$$
This implies that
$$\||f-f^{\tau-1}|^r\|_{p(\cdot)}\leq C_0^r \|\chi_{\{\tau<\infty\}}\|_{p(\cdot)}.$$
Then we get that
$$\Big\| \chi_{\{\tau<\infty\}\cap\{f-f_{\tau-1}\geq t\} }\Big\|_{p(\cdot)} \leq \frac{1}{t^r}\||f-f^{\tau-1}|^r \chi_{\{\tau<\infty\}}\|_{p(\cdot)} \leq \frac{C_0^r}{t^r} \|\chi_{\{\tau<\infty\}}\|_{p(\cdot)}.$$
If $t\geq 2C_0$, we take $r=\frac{t}{2C_0} \geq 1$, then
$$\big(\frac{C_0}{t}\big)^r \leq \frac{1}{2^r}=e^{-r\ln2}=e^{-\frac{t}{2C_0}\ln2}=e^{-\frac{t}{2C\|f\|_{BMO_1} }\ln2}=e^{-\frac{C_2 t}{\|f\|_{BMO_1}}},$$
where $C_2=\frac{1}{2C}\ln2$.

If $ t<2C_0$, take $C_2=\frac{1}{2C}\ln2$. Then $e^{-\frac{C_2 t}{\|f\|_{BMO_1}}} = \big(\frac{1}{2}\big)^{\frac{t}{2C_0}}>1/4.$
Since $$\{\tau<\infty\}\cap\{f-f_{\tau-1}\geq t\} \subset \{\tau<\infty\},$$ it follows that
$$\Big\| \chi_{\{\tau<\infty\}\cap\{f-f_{\tau-1}\geq t\} }\Big\|_{p(\cdot)} \leq \|\chi_{\{\tau<\infty\}}\|_{p(\cdot)}\leq 4 e^{-\frac{C_2 t}{\|f\|_{BMO_1}}}\|\chi_{\{\tau<\infty\}}\|_{p(\cdot)}.$$
We conclude this proof.
\end{proof}

\begin{remark}
The result above depends on condition \eqref{65}, and we refer to Corollary 3.5 in \cite{HO3} for another John-Nirenberg theorem with a non-log-H\"{o}lder exponent function $p(\cdot)$ on $\mathbb{R}^n$.
\end{remark}
\begin{remark}
Recently, there are some new results concerning martingale Hardy spaces with variable exponents; see \cite{haojiao,LPD,wujiao}.

\end{remark}

{\bf Acknowledgements.} The authors are grateful to the referee for her/his carefully reading and useful comments and suggestions. Yong Jiao is supported by NSFC(11471337) and Hunan Provincial Natural Science Foundation(14JJ1004); Wei Chen is supported by NSFC(11101353).

\bibliographystyle{amsplain}

\end{document}